 \newcommand{\n}{\mathfrak{n} }
 \newcommand{\m}{\mathfrak{m} }
 \newcommand{\Q}{\mathbb{Q}}
 \newcommand{\Z}{\mathbb{Z}}
 \newcommand{\ov}{\overline}
\newcommand{\Ac}{\mbox{${\mathcal A}$}}
\newcommand{\Bc}{\mbox{${\mathcal B}$}}
\newcommand{\Cc}{\mbox{${\mathcal C}$}}
    \newcommand{\coker}{\operatorname{coker}}
  \newcommand{\Ass}{\operatorname{Ass}}
 \newcommand{\dm}{\operatorname{dim}}
 \newcommand{\h}{\operatorname{ht}}
 \newcommand{\Spec}{\operatorname{Spec}}
 \newcommand{\depth}{\operatorname{depth}}
 \newcommand{\lm}{\lambda}
\newcommand{\sbs}{\subseteq}
\newcommand{\ra}{\rightarrow}
\newcommand{\iso}{\cong} 
\newcommand{\proset}{\,\mathrel{\lower 4pt\hbox{$\scriptscriptstyle/$}
\mkern -14mu\subseteq }\,} 
 \newtheorem{theorem}{Theorem}[section]
 \newtheorem{corollary}[theorem]{Corollary}
 \newtheorem{lemma}[theorem]{Lemma}
 \newtheorem{proposition}[theorem]{Proposition}
 \newtheorem{conjecture}[theorem]{Conjecture}
 \theoremstyle{definition}
 \newtheorem{definition}[theorem]{Definition}
 \newtheorem{example}[theorem]{Example}
 \title[Chern number of filtrations ]
{On Some Conjectures about the Chern Numbers of Filtrations}
\author{Mousumi Mandal,  Balwant Singh and J. K. Verma}
\thanks{The first author is supported by the National Board for Higher Mathematics, India.}
\address{Department of Mathematics, Indian Institute of Technology 
Bombay, Powai, Mumbai 400076, India} \email{mousumi@math.iitb.ac.in}
\address{Department of Mathematics, Indian Institute of Technology 
Bombay, Powai, Mumbai 400076  and MU-DAE Centre for Excellence in Basic Sciences, Kalina Campus, Santacruz, Mumbai 400098, India}
\email{balwantbagga@gmail.com}
\address{Department of Mathematics, Indian Institute of Technology 
Bombay, Powai, Mumbai 400076, India} \email{jkv@math.iitb.ac.in}
\begin{document}

 \maketitle
 \begin{abstract}
 Let $I$ be an $\m$-primary ideal of a Noetherian local ring $(R,\m)$ of positive dimension.  The coefficient $e_1(\mathcal A)$ of the Hilbert polynomial of an $I$-admissible filtration $\mathcal A$ is called the Chern number of $\mathcal A.$
The Positivity Conjecture of Vasconcelos for the Chern number of the integral closure filtration $\{\ov{I^n}\}$ is proved for a 
 $2$-dimensional complete local domain and more generally for  any analytically unramified local ring $R$ whose integral closure in its total ring of fractions is  Cohen-Macaulay as an $R$-module. It is proved that if $I$ is a parameter ideal then the Chern number
of the $I$-adic filtration is non-negative.
Several other results on the Chern number of the integral closure filtration are established, especially in the case when $R$ is not necessarily Cohen-Macaulay. 

 \end{abstract}

\section*{Introduction}

For a nonzero polynomial $P=P(X)\in \mathbb Q[X]$ of degree $d$ such that $P(n)\in \mathbb Z$ 
for $n\gg 0,$ it is customary to write $P$ in the form  
$$ 
P= \sum_{i=0}^{d}(-1)^i e_i(P) \binom{X+d-i}{d-i} 
$$ 
with $e_i(P)$ integers, called the Hilbert coefficients of $P.$ The top two Hilbert coefficients have special names: $e_0(P)$ is the multiplicity of $P$ and $e_1(P),$ the subject matter of this paper,  is the {\bf Chern number} of $P.$  

If $I$ is an $\m$-primary ideal of a Noetherian local ring $(R,\m)$ of positive dimension and $P_I$ is the polynomial associated to the function $n\mapsto \lm(R/I^{n+1}),$ where $\lm$ denotes length as $R$-module, then the Hilbert coefficients  $e_i(P_I)$ are called the Hilbert coefficients of $I$ and are also denoted by $e_i(I).$ In particular, $e_1(I)$ is the Chern number of $I.$ 

If $\mathcal A=\{\mathcal A_n\}_{n\geq 0} $ and $\mathcal B=\{\mathcal B_n\}_{n\geq 0} $ are (decreasing) filtrations of ideals of a ring $R$ then the {\bf admissibility} of  $\Ac$ over $\Bc$ means that there exists a nonnegative integer $k$ such that $\Ac_{n+k} \subseteq  \Bc_n \subseteq \Ac_n$ for every $n\geq 0.$  We say 
that $\Ac$ is $I$-admissible, where $I$ is an ideal of $R,$ if $\Ac$ is admissible over the $I$-adic filtration. 

For an ideal $I$ of a ring $R,$ the integral closure
of $I,$ denoted $\overline{I},$ is the ideal of $R$ consisting of all elements of $R$ which are integral over $I,$ i.e. elements $a \in R$ satisfying an equation of the form
$ a^r+b_1a^{r-1}+ \cdots + b_r=0 $
with  $r$ some positive integer and  $b_i \in I^i$ for every $i.$ Applying this construction to the powers $I^n$ of an $\m$-primary ideal $I$ in a Noetherian local ring $(R,\m),$ we get the filtration 
$\{\ov{I^n}\}$ on $R.$  If $R$ is analytically unramified then this filtration is $I$-admissible by  Rees \cite {rees}. It follows that  the normal Hilbert function of $I,$ namely the function $n\mapsto  \lm(R/\ov{I^{n+1})},$ is given, for $n\gg 0,$ by a  polynomial $\overline{P}_I,$  called the {\bf normal Hilbert polynomial} of $I.$ The Hilbert coefficients 
$e_i(\overline{P}_I)$  are called the normal Hilbert coefficients of $I$ and are also denoted by $\ov{e}_i(I).$ 
In particular, $\ov{e}_1(I)$ is the {\bf normal Chern number} of $I.$ 

At a conference held in 2008 in Yokohama, Japan, Wolmer Vasconcelos \cite{vas} announced several conjectures about the the Chern number of a parameter ideal and the normal Chern number of an $\m$-primary ideal in a Noetherian local ring $(R,\m).$  

In this paper, we discuss two of these conjectures, namely the Positivity Conjecture and the Negativity Conjecture. We also provide some general estimates on the Chern number.

{\bf The Positivity Conjecture of Vasconcelos} says that if  $I$ is an $\m$-primary ideal of  an analytically unramified Noetherian local ring $(R,\m)$ of positive dimension then $\ov{e}_1(I) \geq 0.$

We settle this conjecture for an analytically unramified Noetherian local ring $(R,\m)$ whose integral closure in its total ring of fractions
is  Cohen-Macaulay as an $R$-module. This is done in section 1. A  consequence is that the Positivity Conjecture holds for a 2-dimensional complete Noetherian local domain. We also settle the conjecture in case there is a Cohen-Macaulay local ring $(S,\n)$ dominating  $(R,\m)$ such that $\lm(S/R)$ is finite.

We  show in section 2  that there is a $2$-dimensional analytically unramified
Noetherian local ring constructed from a 1-dimensional simplicial complex for which the normal Chern number 
is negative. This simplicial complex is non-pure.  On the other hand, we show that the normal Chern number
of the maximal homogeneous ideal of the face ring of  a simplicial complex $\Delta$ of dimension $d-1$ is $df_{d-1}-f_{d-2},$  where $f_i$ is the number of $i$-dimensional faces of $\Delta.$ This implies that if $\Delta$ is pure then $\ov{e}_1(\m) \geq 0.$ These results indicate perhaps that for the Positivity Conjecture to hold,   the ring needs to be quasi-unmixed, i.e. its completion $\hat R$ should be equidimensional.

Recall here that $R$ is said to be unmixed if $\dim \hat R/\mathfrak p=\dim \hat R$  for every $\mathfrak p\in \Ass \hat R.$

{\bf The Negativity Conjecture of Vasconcelos} says that if $J$ is a parameter ideal of an unmixed Noetherian local ring $R$ of positive dimension then $e_1(J) < 0$ if and only if $R$ is not Cohen-Macaulay.

Vasconcelos \cite{vas} settled the conjecture for a domain  that is essentially of finite type over a field.  It was settled for a universally catenary Noetherian local domain containing a field by Ghezzi, Hong and Vasconcelos in \cite{ghv}. 
They also proved that if $S$ is a Cohen-Macaulay local ring and $\mathfrak p$ is a prime ideal of $S$ such that  $\dim S/\mathfrak p \geq 2$ and $S/\mathfrak p$ is not Cohen-Macaulay then $e_1(J) < 0$ for every  parameter ideal $J$ of $S/\mathfrak p.$
Mandal and Verma \cite{mv} settled the Negativity Conjecture for parameter ideals in certain quotients of  a regular local ring.
The conjecture has been settled recently by Ghezzi, Goto, Hong, Ozeki, Phuong and Vasconcelos \cite{gghopv}.

In section 3, we discuss the corresponding question for a finite module $M$ (of positive dimension)  over a Noetherian local ring $(R,\m)$ with respect to an ideal $I$ such that $\lm(M/IM)  < \infty.$ In this case, if $P_I(M,X)$ is the polynomial associated to the function  $n\mapsto \lambda(M/I^{n+1}M),$  we write $e_i(I,M)$  for $e_i(P_I(M,X)).$ In particular, we have the coefficient $e_1(I,M),$ which we call the Chern number of $I$ with respect to $M.$ While the multiplicity $e_0(I,M)$ has been  studied extensively, the investigation of the Chern number $e_1(I,M),$ especially over non-Cohen-Macaulay rings, has begun only recently. We show that if $J$ is a parameter ideal with respect to $M$ then  
$e_1(J,M)\leq  0$ and, further, that $e_1(J,M)< 0$ if  $\depth M = \dim R-1.$ We also show that if $R$ is   Cohen-Macaulay and $M$ is an unmixed $R$-module with $\dim M= \dim R$ then $M$ is Cohen-Macaulay if and only if $e_1(J,M)=0,$ for one (resp. every)  parameter ideal $J.$ 
 
In   section 4, we determine some bounds for the normal Chern number of  an $\m$-primary ideal in terms of a minimal reduction $J$ of $I.$ Using Serre's formula for multiplicity of a parameter ideal in terms of the Euler characteristic
of the Koszul homology, we show that
$$ 
\ov e_1(J)\leq \sum_{n\geq 1}\lm(\ov{J^n}/J\ov{J^{n-1}})+e_1(J). 
$$
This generalizes a formula of Huckaba and Marley \cite{hm} for the integral
closure filtration in a Cohen-Macaulay local ring.

In the  final  section  5, we find some estimates on the Chern number of a prameter ideal $J$ in a Noetherian local ring  $(R,\m)$ assuming  that there exists  a Cohen-Macaulay local ring $(S,\n)$ dominating  $(R,\m)$ with  $\lm (S/R)< \infty.$  We show in this case that $\mu_R(S/R)\, \leq \,  -e_1(J) \,\leq \, \lm (S/R),$ 
and that if the equalities hold for every parameter ideal $J$ then  $R$ is Buchsbaum.

\noindent {\bf Acknowledgements:} We thank Shiro Goto for useful discussions.

\section{The Positivity Conjecture of Vasconcelos}

\begin{conjecture}[{\bf The Positivity Conjecture of Vasconcelos}] \label{pc}   Let $I$ be an $\m $-primary ideal of  an  analytically unramified Noetherian local ring $(R,\m )$ of positive dimension. Then 
$\ov{e}_1(I) \geq 0.$ 
\end{conjecture}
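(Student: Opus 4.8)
The plan is to attempt the statement in full generality by reducing to a complete ring and then reading $\overline{e}_1(I)$ off the normalization. Since $R$ is analytically unramified, $\hat R$ is reduced and the normal filtration is preserved under completion, $\overline{I^n}\hat R=\overline{I^n\hat R}$, so $\lambda(R/\overline{I^n})=\lambda(\hat R/\overline{I^n\hat R})$ for all $n$ and hence $\overline{e}_1(I)=\overline{e}_1(I\hat R)$. I may therefore assume $R$ is complete and reduced, with module-finite normalization $S=\overline R=\prod_i S_i$, a finite product of complete normal local domains $S_i=\overline{R/P_i}$ indexed by the minimal primes $P_i$ of $R$. Because an element of $R$ is integral over $I$ precisely when it is integral over $IS$, one has $\overline{I^n}=\overline{I^nS}\cap R$ (closure taken in $S$), so the inclusion $R\hookrightarrow S$ yields an exact sequence $0\to R/\overline{I^n}\to S/\overline{I^nS}\to C/\overline{I^nS}\,C\to 0$, where $C=S/R$ and $\overline{I^nS}=\prod_i\overline{(IS_i)^n}$.

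Comparing Hilbert polynomials gives $\lambda(R/\overline{I^n})=\sum_i\lambda(S_i/\overline{(IS_i)^n})-\lambda(C/\overline{I^nS}\,C)$ for $n\gg 0$. The top-dimensional domains, those with $\dim S_i=d$, contribute non-negatively to $\overline{e}_1(I)$: for each such $S_i$ I would choose a minimal reduction $J_i$ of $IS_i$ and invoke the Huckaba--Marley identity $\overline{e}_1(IS_i)=\sum_{n\ge 1}\lambda(\overline{(IS_i)^n}/J_i\,\overline{(IS_i)^{n-1}})\ge 0$, available once $S_i$ is Cohen--Macaulay. This is automatic when $\dim S_i\le 2$, since a normal local ring of dimension at most two is Cohen--Macaulay; the higher-dimensional normal domains I would treat by induction on $d$ through a superficial element for the normal filtration, the base case $d=1$ being immediate from the monotone decrease of $\lambda(\overline{I^n}/\overline{I^{n+1}})$ to $\overline{e}_0$.

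The step I expect to be the main obstacle is the control of the remaining contributions, and here I anticipate the argument to break down irreparably. A minimal prime $P_i$ with $\dim S_i=d-1$ contributes a summand of degree exactly $d-1$ to $\sum_i\lambda(S_i/\overline{(IS_i)^n})$; this enters the degree-$(d-1)$ term of the normal Hilbert polynomial and so pushes $\overline{e}_1(I)$ \emph{down} by a positive multiple of $e_0(IS_i)$, while the conductor term $\lambda(C/\overline{I^nS}\,C)$ can subtract further when $\dim C\ge d-1$. When $S$ is Cohen--Macaulay as an $R$-module, every minimal prime of $R$ has dimension $d$, so these degree-$(d-1)$ contributions are absent and the positive terms of the top-dimensional $S_i$ dominate; this is exactly the equidimensional regime settled in Section~1. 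In the general, non-equidimensional case, however, there is no structural reason for the top-dimensional contributions to absorb the negative push of the codimension-one components, and I expect this obstruction to be fatal rather than merely technical. It isolates quasi-unmixedness as the genuine hypothesis, and indeed the $2$-dimensional example of Section~2 coming from a non-pure simplicial complex realizes precisely such a codimension-one contribution and makes $\overline{e}_1$ negative --- so the statement of Conjecture~\ref{pc} as worded must fail exactly at this step.
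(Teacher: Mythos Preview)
The statement is a conjecture, and the paper does not prove it in full; in fact Example~2.4 shows it is false as worded. You correctly arrive at the same conclusion, and your diagnosis---that a minimal prime of dimension $d-1$ contributes a degree-$(d-1)$ term to $\sum_i\lambda(S_i/\overline{(IS_i)^n})$ which pushes $\overline{e}_1$ down---is the right mechanism and matches the paper's suggestion that quasi-unmixedness should be added as a hypothesis.

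That said, the positive half of your argument carries a genuine gap. You propose to handle top-dimensional normal domains $S_i$ of dimension $\ge 3$ ``by induction on $d$ through a superficial element for the normal filtration,'' but this does not go through: a normal local domain of dimension $\ge 3$ need not be Cohen--Macaulay, reduction modulo a superficial element does not preserve normality, and the normal filtration of $IS_i$ does not descend cleanly to the quotient. So the Huckaba--Marley identity is simply unavailable for those factors, and your decomposition does not, as written, recover even the cases the paper does settle.

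The paper's positive results (Theorem~1.2 and Corollary~1.3) take a different route that sidesteps this. Rather than splitting the integral closure into its local factors and arguing on each, the paper works with a single finite cover $S/R$ that is birational or of finite length, sandwiches the filtration $\mathcal C_n=R\cap I^nS$ between the $I$-adic and the integral-closure filtrations, and reads off $\overline{e}_1(I)\ge e_1(I,S)$ from the exact sequence $0\to R/\mathcal C_n\to S/I^nS\to S/(R+I^nS)\to 0$ together with $\dim S/R\le d-1$. When $S$ is Cohen--Macaulay \emph{as an $R$-module} and $I$ is a parameter ideal, $e_1(I,S)=0$ and the conjecture follows. This hypothesis is strictly stronger than equidimensionality of $R$, and it is precisely what makes the argument work without ever needing the individual normal factors $S_i$ to be Cohen--Macaulay.
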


In this section, we prove that the conjecture holds for a ring $R$ which satisfies any one of the following conditions: (i) $R$ is  Cohen-Macaulay; (ii) the integral closure of $R$ is Cohen-Macaulay as an $R$-module; (iii) $R$ is a complete local domain of dimension 2; (iv) some other technical conditions. See Corollary 
\ref{cor-pc} for details. 

Let the notation and assumptions be as in the conjecture. 

Put $\Ac_n=\ov{I^n},$ the integral closure of $I^n$ in $R.$ Then, the filtration $\Ac=\{\Ac_n\}$ is the integral closure filtration of the $I$-adic filtration and, as noted in the Introduction, 
the analytical unramifiedness  of $R$ implies by \cite{rees}  that $\Ac$ is $I$-admissible.   
 More generally, let  $\Bc=\{\Bc_n\}$ be any filtration of ideals of $R$ which is $I$-admissible.  Then 
the function $n\mapsto  \lm(R/\Bc_{n+1})$ is given, for $n\gg 0,$ by a  polynomial $P_{\Bc}\in \mathbb Q[X].$ 
In this situation, we write $e_i(\Bc)$ for $e_i(P_{\Bc}).$ In particular, $e_i(\Ac) = \ov{e}_i(I).$ 

By a {\bf finite cover} $S/R,$ we mean a ring extension $R\subseteq S$  such that $S$ is a finite $R$-module. Then $S$ is a Noetherian semilocal ring.  We say that the finite cover $S/R$ is {\bf birational} if $R$ is reduced and $S$ is contained in the total quotient ring of $R;$ that $S/R$ is of {\bf finite length} if $\lm (S/R)$ is finite;  and that $S/R$ is  {\bf Cohen-Macaulay} if $S$ is Cohen-Macaulay as an $R$-module.   

\begin{theorem} \label{bb} Let $(R,\m)$ be a Noetherian local ring of dimension $d\geq 1.$ 
Let $S/R$ be a finite cover such that at least one of the following two conditions holds: {\rm (i)} $S/R$ is of finite length;  or {\rm (ii)} $S/R$ is birational.  Let $I$ be an $\m$-primary ideal of $R,$ and let $\Bc$ be a filtration  of $R$ such that $\Bc$ is $I$-admissible and $R\cap I^nS \sbs \Bc_n$ for $n\gg 0.$  Then $e_1(\Bc) \geq e_1(I, S).$  
\end{theorem}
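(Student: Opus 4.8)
The plan is to compare the Hilbert polynomials of the two filtrations $\Bc$ on $R$ and $\{I^nS\}$ on $S$ by passing through the intermediate filtration $\Cc = \{R \cap I^nS\}$ on $R$. First I would observe that $\Cc$ is itself $I$-admissible: the containments $I^n \subseteq R \cap I^nS \subseteq \Cc_n$ follow trivially, while admissibility on the other side is where the hypotheses (i) or (ii) enter. In case (i), $\lm(S/R) < \infty$ gives $\m^t S \subseteq R$ for some $t$, hence $R \cap I^{n+s}S \supseteq \m^t I^{n} S \cap R = \m^t I^n S$ once $I^s \subseteq \m^t$, and this sits inside $I^n$... more carefully, one uses the Artin–Rees lemma applied to the $R$-submodule $R \subseteq S$ to get $I^{n+k}S \cap R \subseteq I^n(I^kS \cap R) \subseteq I^n$ for suitable $k$, which is exactly $I$-admissibility of $\Cc$. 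In case (ii), $S$ is a finite birational extension of the reduced ring $R$, so $S$ is module-finite over $R$ and the same Artin–Rees argument applies to $R$ as an $R$-submodule of $S$. Either way $\Cc$ is $I$-admissible, so $P_{\Cc}$ exists and has degree $d$.

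Next I would use the hypothesis $R \cap I^nS \subseteq \Bc_n$ for $n \gg 0$, i.e. $\Cc_n \subseteq \Bc_n$, which gives $\lm(R/\Bc_{n+1}) \le \lm(R/\Cc_{n+1})$ for $n \gg 0$, and hence $e_1(\Bc) \ge e_1(\Cc)$ after checking that $e_0(\Bc) = e_0(\Cc) = e_0(I)$ (both filtrations are squeezed between $I$-adic powers and their shifts, so they share the leading coefficient; then the inequality on lengths forces the sign on the next coefficient, comparing via $\sum_{n \le N}$ or directly from $P_{\Bc} - P_{\Cc}$ being eventually $\ge 0$ of degree $\le d-1$ with the right leading sign). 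So it remains to show $e_1(\Cc) \ge e_1(I, S)$, i.e. $e_1$ of $\{R \cap I^nS\}$ as a filtration of $R$ dominates $e_1$ of $\{I^nS\}$ as a filtration of the module $S$.

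For this last step I would compare length functions through the short exact sequences
\[
0 \longrightarrow R/(R \cap I^{n+1}S) \longrightarrow S/I^{n+1}S \longrightarrow S/(R + I^{n+1}S) \longrightarrow 0.
\]
This yields $\lm(S/I^{n+1}S) - \lm(R/(R\cap I^{n+1}S)) = \lm\big(S/(R + I^{n+1}S)\big)$, and the right-hand side is a decreasing-in-$n$ function bounded above by $\lm(S/R) < \infty$ in case (i), or (in case (ii)) it stabilizes because $S/R$ is a finite $R$-module annihilated by a power of a non-zerodivisor and $I^nS$ eventually absorbs the relevant conductor, so in both cases it is eventually a constant $c \ge 0$. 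Therefore $P_I(S,X) - P_{\Cc}(X)$ is the constant $c \ge 0$; comparing the normalized expansions, the constant term only affects $e_d$, so in fact $e_i(\Cc) = e_i(I,S)$ for $i < d$, and in particular $e_1(\Cc) = e_1(I,S)$. Chaining the two comparisons gives $e_1(\Bc) \ge e_1(\Cc) = e_1(I,S)$, as claimed.

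The main obstacle I anticipate is the $I$-admissibility of $\Cc$ in case (ii): the birational setting does not immediately give $\m^t S \subseteq R$, so one must argue via the conductor $\mathfrak{c} = \ann_R(S/R)$, which is $\m$-primary-or-larger only after localizing, and handle the Artin–Rees step for the (not necessarily finitely generated as an ideal, but finitely generated as a module) extension $R \subseteq S$ carefully; relatedly, one must confirm that $S/(R + I^{n+1}S)$ genuinely stabilizes rather than merely stays bounded, which again comes down to $I^nS \supseteq \mathfrak{c}S$ for $n \gg 0$ forcing $R + I^nS = S$ on the relevant part. Once that finiteness/stabilization is pinned down, the rest is the routine bookkeeping of Hilbert coefficients under the two exact-sequence comparisons.
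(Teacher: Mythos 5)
Your skeleton is the same as the paper's: introduce the intermediate filtration $\Cc_n=R\cap I^nS$, deduce $e_1(\Bc)\ge e_1(\Cc)$ from $\Cc_n\subseteq\Bc_n$ once the leading terms agree, and then compare $\Cc$ with the $I$-adic filtration on $S$ through the exact sequence $0\to R/\Cc_{n}\to S/I^{n}S\to S/(R+I^{n}S)\to 0$. (Incidentally, the $I$-admissibility of $\Cc$ that you labour over via Artin--Rees comes for free from the hypotheses: $\Cc_{n+k}\subseteq\Bc_{n+k}\subseteq I^{n}\subseteq\Cc_n$ for $n\gg 0$, which is exactly the chain the paper uses to get the degree and $e_0$ comparisons.)

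However, your handling of the cokernel term in case (ii) contains a genuine error. You claim that $\lm\bigl(S/(R+I^{n}S)\bigr)$ stabilizes to a constant $c$, justified by ``$I^nS\supseteq \mathfrak{c}S$ for $n\gg 0$''. That containment cannot hold: $IS$ lies in the Jacobson radical of the semilocal ring $S$, so $\bigcap_n I^nS=0$ by Krull intersection; and $R+I^nS=S$ for some $n$ would force $S=R$ by Nakayama applied to $S/R$. In fact $S/(R+I^nS)\cong (S/R)/I^n(S/R)$, and in the birational case $S/R$ is merely annihilated by a nonzerodivisor, so $\dim (S/R)$ may be as large as $d-1$; then $\lm\bigl(S/(R+I^nS)\bigr)$ grows like a polynomial of degree $d-1$ and your asserted equality $e_1(\Cc)=e_1(I,S)$ fails --- for instance, for the normalization of a $2$-dimensional non-normal domain with height-one conductor one gets $e_1(\Cc)=e_1(I,S)+e_0(I,S/R)>e_1(I,S)$. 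The repair is what the paper actually does: you only need that $P_{I,S}-P_{\Cc}$ is eventually nonnegative of degree $\le d-1$ (because $\dim (S/R)\le d-1$), which already gives $e_0(P_{\Cc})=e_0(P_{I,S})$ and the inequality $e_1(\Cc)\ge e_1(I,S)$, and your chain then closes. A similar, though harmless, slip occurs in case (i) when $d=1$: there the constant $\nu=\lm(S/R)$ does affect $e_1$, giving $e_1(\Cc)=e_1(I,S)+\nu$, so again only the inequality, not the equality, is the correct claim.
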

 
\begin{proof}
Let $\Cc$ denote the filtration of $R$ given by $\Cc_n=R\cap I^nS.$ For our proof, we need four length functions and their associated polynomials in $\Q[X]$ as listed in the following table:
 
 \begin{center}
\begin{tabular}{|l|c|} \hline 
Length function        & Associated polynomial\\ 
\hline
 $\lm (R/I^{n+1})$    & $P_I= P_I(X)$ \\ 
 $\lm (S/I^{n+1}S)$      & $P_{I,S}=P_{I,S}(X)$ \\
 $\lm (R/\mathcal B_{n+1})$      & $P_{\mathcal B}= P_{\mathcal B}(X)$ \\
 $\lm (R/\Cc_{n+1})$     & $P_{\mathcal C} = P_{\mathcal C}(X)$ \\ \hline
 \end{tabular}             
\end{center}

\noindent By the given conditions on $\Bc,$ there exists a nonnegaive integer $k$ such that  
$$ 
\Cc_{n+k} \subseteq \mathcal B_{n+k }\subseteq I^{n}\subseteq \Cc_{n} \sbs \Bc_n  
$$
for $n \geq  0.$ Therefore 
$$
\lm (R/\Cc_{n+k})\geq  \lm (R/\Bc_{n+k})\geq \lm (R/I^{n})\geq \lm (R/\Cc_{n}) \geq \lm (R/\Bc_{n}) 
$$
for $n\geq  0,$ from which it follows that 
$$
d = \deg P_I =\deg P_{\mathcal B}  =\deg P_{\mathcal C}   \mbox{ and } e_0(P_I )=e_0(P_{\Bc} ) = e_0(P_{\Cc} ). \eqno{(A)} 
$$
Now, the inequalities $\lm (R/\Cc_{n}) \geq \lm (R/\Bc_{n})$ for $n\geq  0$ imply that 
$$ 
e_1(P_{\Bc})\geq e_1(P_{\Cc}).  \eqno{(B)} 
$$ 

Assume now that (i) holds, i.e. $S/R$ is of finite length (but may not be birational).  Then, for $n\gg 0,$  we have $I^nS\sbs R,$ so $\Cc_n=I^nS.$  
 Therefore, for $n\gg 0,$ we have  
$$   
\lm (R/\Cc_n) = \lm (R/I^{n}S) = \lm (S/I^{n}S) - \nu,  
$$
where $\nu= \lm (S/R).$ Consequently, $P_{\Cc} = P_{I,S}-\nu.$  Now, by $(B),$ we get 
$$ 
e_1(P_{\Bc})\geq  e_1(P_{\Cc})= e_1(P_{I,S} -\nu).  \eqno{(C)} 
$$ 
If $d=1$ then $e_1(P_{I,S} -\nu) = e_1(P_{I,S}) +\nu \geq  e_1(P_{I,S}),$ while if $d\geq 2$ then $e_1(P_{I,S} -\nu)= e_1(P_{I,S}).$ In either case, 
$e_1(P_{I,S} -\nu) \geq e_1(P_{I,S}).$ Therefore, by $(C),$  we get  
$$e_1(\Bc)= e_1(P_{\Bc}) \geq 
 e_1(P_{I,S}) =  e_1(I,S),$$ 
which proves  the assertion under condition (i). 

Now, drop the assumption (i)  and assume (ii), so that $S/R$ is birational (but may not be of finite length).  
 Then $S/R$ is annihilated by a nonzero divisor of $R,$ so $\dim S/R \leq d-1.$ 
Therefore, since $\deg P_{\Cc} = d $ by (A), the exact sequence 
$$
0\ra R/\Cc_n \ra S/I^{n}S \ra S/(R + I^{n}S) \ra 0
$$
shows that
$\deg(P_{I,S})= \deg P_{\Cc} $ and $e_0(P_{I,S})= e_0(P_{\Cc}) .$ Combining this with the inequalities $P_{I,S}(n)   \geq   P_{\Cc}(n)$ for $n\gg 0,$ which also result from the exact sequence,  we get 
$e_1(P_{\Cc})  \geq e_1(P_{I,S}).$ Thus, using $(B)$ again, we get  
$$e_1(\Bc)=e_1(P_{\Bc}) \geq e_1(P_{\Cc}) \geq e_1(P_{I,S})= e_1(I,S).$$ This proves the assertion under condition (ii).  
\end{proof}

\begin{corollary} \label{cor-pc}  Let $(R,\m)$ be an anlytically unramified Noetherian local ring of positive dimension.  Then the Positivity Conjecture \ref{pc}  holds for $R$ if $R$ satisfies any one of the following conditions: 

{\rm  (1)} $R$ has a finite Cohen-Macaulay cover which is of finite length or is birational. 

{\rm (2) }$R$ is Cohen-Macaulay {\rm (cf. \cite{hm}).}  

{\rm (3)} $\dim R=1.$ 

{\rm (4)} The integral closure of $R$ is Cohen-Macaulay as a an $R$-module. 

{\rm (5)} $\dim R=2$ and all maximal ideals of the integral closure of $R$ have the same height.

{\rm (6)} $R$ is a complete local integral domain of dimension 2.          

\end{corollary}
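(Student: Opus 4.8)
\emph{Overall strategy.} The plan is to obtain every case from Theorem~\ref{bb} applied to the normal filtration $\Ac=\{\ov{I^n}\}$, which is $I$-admissible by \cite{rees} since $R$ is analytically unramified. First I would record two easy facts. The first: for \emph{any} finite cover $S/R$ one has $R\cap I^nS\sbs\ov{I^n}$ for all $n$; indeed, if $x\in R\cap I^nS$ then $xS\sbs I^nS$ because $I^nS$ is an ideal of $S$ containing $x$, and since $S$ is a finitely generated faithful $R$-module the determinant trick produces a monic relation $x^k+c_1x^{k-1}+\cdots+c_k=0$ with $c_i\in(I^n)^i$, i.e. $x\in\ov{I^n}$. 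Taking $\Bc=\Ac$ in Theorem~\ref{bb}, it follows that for \emph{any} finite cover $S/R$ that is of finite length or birational we get $\ov e_1(I)=e_1(\Ac)\geq e_1(I,S)$. The second fact is the key lemma: if $M$ is a Cohen-Macaulay $R$-module with $\dim_RM=\dim R=d$ then $e_1(I,M)\geq0$ for every $\m$-primary $I$. Granting these, the whole corollary reduces to exhibiting in each case a finite cover $S/R$ that is of finite length or birational and is Cohen-Macaulay as an $R$-module (note such a cover automatically has $\dim_RS=d$, since $\ann_RS=0$ in the birational case and $\dim(S/R)=0$ in the finite-length case).

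\emph{Proof of the key lemma.} I would reduce to infinite residue field by the faithfully flat base change $R\to R[t]_{\m R[t]}$, which alters neither the hypotheses nor the Hilbert coefficients, and then choose a minimal reduction $J=(a_1,\dots,a_d)$ of $I$. Since $J$ is $\m$-primary and $\dim_RM=d$, the sequence $a_1,\dots,a_d$ is a system of parameters for $M$, hence $M$-regular because $M$ is Cohen-Macaulay; therefore $\lm(M/J^{n+1}M)=\binom{n+d}{d}\lm(M/JM)$ and $e_1(J,M)=0$. Because $J$ is a reduction of $I$ we have $e_0(I,M)=e_0(J,M)$ and $\lm(M/I^{n+1}M)\leq\lm(M/J^{n+1}M)$ for all $n$; comparing the two Hilbert polynomials, which have degree $d$ and equal leading coefficients, forces $e_1(I,M)\geq e_1(J,M)=0$.

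\emph{The six cases.} \textbf{(1)} Apply the set-up directly to the given Cohen-Macaulay cover. \textbf{(2)} Take $S=R$, a birational cover since $R$ is reduced; then $e_1(I,R)=e_1(I)\geq0$ by the key lemma with $M=R$, recovering Huckaba--Marley \cite{hm}. \textbf{(3)} A one-dimensional reduced Noetherian local ring is Cohen-Macaulay because $\m\notin\Ass R=\Min R$, so (3) is a special case of (2) (alternatively, $\ov R$ is a finite product of DVRs, hence a Cohen-Macaulay $R$-module, and birational). \textbf{(4)} Take $S=\ov R$, the integral closure of $R$ in its total quotient ring: it is module-finite over $R$ by \cite{rees}, birational, and Cohen-Macaulay as an $R$-module by hypothesis. \textbf{(5)} I would show the hypothesis forces $\ov R$ to be Cohen-Macaulay as an $R$-module, whence (4) applies: $\ov R$ is a two-dimensional normal Noetherian semilocal ring, so by Serre's criterion it satisfies $(S_2)$, i.e. $\depth(\ov R)_{\mathfrak{n}}\geq\min(2,\h\mathfrak{n})$ for every maximal ideal $\mathfrak{n}$; since these heights all equal $\dim\ov R=2$, we get $\depth_R\ov R=\min_{\mathfrak{n}}\depth(\ov R)_{\mathfrak{n}}=2=\dim_R\ov R$. \textbf{(6)} The integral closure of the complete (hence Henselian) local domain $R$ is a local domain, so its unique maximal ideal vacuously meets the condition in (5); thus (6) is a special case of (5).

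\emph{Main obstacle.} Granting Theorem~\ref{bb}, there is no deep difficulty left. The two points needing care are the passage in (5)--(6) from the ring-theoretic condition $(S_2)$ on $\ov R$ to its being Cohen-Macaulay \emph{as an $R$-module} --- this is exactly where the equal-height hypothesis enters, since a height-one maximal ideal of $\ov R$ would pull $\depth_R\ov R$ down to $1<d$ --- and the module-theoretic reduction theory used in the key lemma (that a minimal reduction of $I$ is an $M$-regular sequence and that $e_0(I,M)=e_0(J,M)$); both are routine.
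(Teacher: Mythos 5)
Your proposal is correct and follows essentially the same route as the paper: apply Theorem~\ref{bb} with $\Bc=\Ac=\{\ov{I^n}\}$ (using $R\cap I^nS\sbs\ov{I^n}$, which the paper quotes from \cite{HS}), observe that a Cohen--Macaulay cover gives $e_1(I,S)\geq 0$, and reduce cases (2)--(6) to the cover cases exactly as in the paper (trivial cover, one-dimensional reduced is Cohen--Macaulay, integral closure finite and birational by \cite{rees}, equal heights plus $(S_2)$ give Cohen--Macaulayness of $R'$ as an $R$-module, and locality of $R'$ for a complete domain). The only cosmetic difference is where the minimal-reduction argument enters: the paper replaces $I$ by a parameter ideal at the outset so that $e_1(I,S)=0$ outright, while you keep $I$ general and prove $e_1(I,S)\geq 0$ by comparing with a minimal reduction inside the key lemma; both are sound.
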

\begin{proof} Since a minimal reduction of an $\m$-primary ideal $I$ gives rise to the same integral closure filtration as  $I$ does, it is enough to prove the conjecture (under any of the above conditions on $R$) for a parameter ideal of $R.$  So, let $I$ be a parameter ideal of $R,$ and let $\Ac$ be the integral closure filtration of the $I$-adic filtration of $R.$ Then,  as noted earlier,  $\Ac$ is $I$-admissible, and we have $\ov e_1(I)= e_1(\Ac).$  Thus we have to show that $e_1(\Ac)\geq 0$ under each of the six condtions. 

(1) Let $S/R$ be a finite  Cohen-Macaulay cover which is of finite length or is birational. Since $S$ is integral over $R,$ we have $R\cap I^nS\sbs \Ac_n$ for every $n\geq 0$ by Proposition 1.6.1 of \cite{HS}. So, by the above theorem applied with $\Ac$ in place of $\Bc,$ we get  $e_1(\Ac)\geq e_1(I,S).$ Since $S$ is Cohen-Macaulay as an $R$-module and $I$ is a parameter ideal of $R,$ we have $e_1(I,S)=0.$ Thus 
$e_1(\Ac)\geq 0.$ 

(2) Apply (1) to the trivial cover $R/R.$

(3) Since $R$ is reduced and one dimensional, it is  Cohen-Macaulay, so we can use (2).

For the remaining part of the proof, let $R'$ be the integral closure of $R$ in its total quotient ring. Then $R'/R$ is a finite birational cover by \cite{rees}, and $\dim R'=\dim R.$  

(4) Since $R'/R$ is a finite birational cover which is Cohen-Macaulay, we are done by (1). 

(5) $\dim R'=2$ implies that $R'$ is Cohen-Macaulay as a ring. Now, it is easy to see that the assumption that all maximal ideals of $R'$ have the same height implies that $R'$ is Cohen-Macaulay as an $R$-module. So the assertion follows from (4).  

(6) In this case, it is well known that $R'$ is local, so (5) applies.  
\end{proof}

\section{The Positivity Conjecture for the Maximal Homogeneous Ideal of a Face Ring}

In this section, we show that the Positivity Conjecture holds for the filtration $\overline{\m^n}$ where $\m$ is the maximal homogeneous ideal of the face ring of a pure simplicial
complex $\Delta.$ Let $\Delta$ be a $(d-1)-$dimensional simplicial complex. Let $f_i$ denote the number of $i$-dimensional faces of
$\Delta$ for $i=-1,0, \ldots, d-1.$ Here $f_{-1}=1.$ Let $\Delta$ have $n$ vertices $\{v_1, v_2, \ldots, v_n\}.$ Let $x_1,x_2, \ldots,x_n$ be indeterminates over a field $k.$ The ideal $I_{\Delta}$ of $\Delta$ is the ideal generated by the square free monomials
$x_{a_1}x_{a_2}\ldots x_{a_m}$ where $1 \leq a_1 < a_2 < \cdots < a_m \leq n$ and $\{v_{a_1},v_{a_2}, \ldots,v_{a_m}\} \notin \Delta.$   The face ring of $\Delta$ over a field $k$ is defined as  $k[\Delta]=k[x_1,x_2,\ldots,x_n]/I_{\Delta}.$

\begin{lemma}
 Let $R$ be a Noetherian ring and $I$ be an ideal of $R$ such that the associated graded ring $G(I)=\bigoplus_{n=0}^\infty I^n/I^{n+1}$ is reduced. Then $\ov{I^n}=I^n$ for all $n$.
\end{lemma}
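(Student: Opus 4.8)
The plan is to argue by comparing $I$-adic orders on the two sides of an equation of integral dependence, after passing to the leading form of $a$ in $G(I)$. So suppose $a\in\ov{I^n}$ and, aiming at a contradiction, that $a\notin I^n$. First I would observe that the set $\{j\ge 0 : a\in I^j\}$ is finite: it is downward closed, and it is bounded because $\bigcap_j I^j\sbs I^n$ while $a\notin I^n$. Hence this set equals $\{0,1,\dots,k\}$ for a well-defined integer $k$ with $0\le k\le n-1$, and the image $a^\ast$ of $a$ in $I^k/I^{k+1}=G(I)_k$ is a nonzero homogeneous element of $G(I)$.

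The next step is where reducedness enters. Since $G(I)$ is reduced, the nonzero homogeneous element $a^\ast$ is not nilpotent, so $(a^\ast)^r\ne 0$ in $G(I)_{rk}=I^{rk}/I^{rk+1}$ for every $r\ge 1$. Because $(a^\ast)^r$ is exactly the image of $a^r$ under the quotient map $I^{rk}\to I^{rk}/I^{rk+1}$, this records the precise order of $a^r$: namely $a^r\in I^{rk}\setminus I^{rk+1}$.

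Now I would bring in the equation of integral dependence. By the definition of the integral closure of the ideal $I^n$, there exist $r\ge 1$ and elements $b_i\in(I^n)^i=I^{ni}$ (for $1\le i\le r$) with
$$a^r+b_1a^{r-1}+\cdots+b_r=0,$$
so that $a^r=-\sum_{i=1}^r b_i a^{r-i}$. For each $i$ one has $b_ia^{r-i}\in I^{ni}\cdot I^{(r-i)k}=I^{\,rk+i(n-k)}$, and $i(n-k)\ge 1$ because $i\ge 1$ and $k\le n-1$; hence $b_ia^{r-i}\in I^{rk+1}$ for every $i$, and therefore $a^r\in I^{rk+1}$. This contradicts the conclusion of the previous paragraph, so in fact $a\in I^n$. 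Since $I^n\sbs\ov{I^n}$ always holds, this gives $\ov{I^n}=I^n$ for all $n$.

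I do not expect any real obstacle here; the only idea needed is to look at the leading form $a^\ast\in G(I)$ and to notice that reducedness of $G(I)$ forces every power $(a^\ast)^r$ to be nonzero, which pins down the order of the left-hand side $a^r$ exactly while every term on the right-hand side has strictly larger order. The one point that deserves a line of care is the possibility that $a\in\bigcap_j I^j$, but this is disposed of at once since $\bigcap_j I^j\sbs I^n$. (Incidentally, the argument uses neither Noetherianity nor reducedness of $R$, only that $G(I)$ has no nonzero nilpotent homogeneous elements.)
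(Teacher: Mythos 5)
Your proof is correct, and it takes a genuinely different route from the paper's. You argue directly with initial forms: assuming $a\in\ov{I^n}\setminus I^n$, you take the leading form $a^\ast\in I^k/I^{k+1}$ (with $k\le n-1$ well defined since trivially $\bigcap_j I^j\sbs I^n$), use reducedness of $G(I)$ to see $(a^\ast)^r\neq 0$, i.e.\ $a^r\notin I^{rk+1}$, and then read off from the equation of integral dependence that every term $b_ia^{r-i}$ lies in $I^{rk+i(n-k)}\sbs I^{rk+1}$, a contradiction. The paper instead passes to the extended Rees ring $\mathcal R(I)=\oplus_{n\in\Z}I^nt^n$, writes $G(I)=\mathcal R(I)/(u)$ with $u=t^{-1}$, and uses reducedness to express $(u)$ as a finite intersection of height-one primes with DVR localizations; since $u$ is a nonzerodivisor, $\Ass(\mathcal R(I)/(u^n))$ is constant, so $(u^n)=\bigcap_i P_i^{(n)}$ is integrally closed, and contracting to $R$ gives $\ov{I^n}=I^n$. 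Your argument is more elementary and in fact needs less: it uses neither Noetherianity of $R$ nor full reducedness of $G(I)$, only that nonzero homogeneous elements of $G(I)$ are not nilpotent, whereas the paper's proof relies on Noetherian machinery (Krull's principal ideal theorem, stability of associated primes of $(u^n)$, symbolic powers). The paper's route yields additional structural information (the height-one primes over $(u)$ and their DVR localizations, essentially the Rees valuations), but for the statement of the lemma your direct computation with leading forms is entirely sufficient.
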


\begin{proof}
 Let $\mathcal R(I)=\oplus_{n \in \Z} I^nt^n$ denote the extended Rees ring of $I.$  Since $G(I)=\mathcal R(I)/(u)$ where $u=t^{-1},$  and $G(I)$ is reduced, $(u)=P_1\cap P_2 \cap \ldots \cap P_r$ for some height one prime ideals $P_1, \ldots, P_r$ of $\mathcal R(I).$ Therefore
$(u)$ is integrally closed in $\mathcal R(I).$ As  $P_i\mathcal R(I)_{P_i}=(u)\mathcal R(I)_{P_i}$ for all $i,$ 
$\mathcal R(I)_{P_i}$ is a DVR for all $i.$ Since $u$ is  
regular, $\text{Ass}(\mathcal R(I)/(u^n))=\{P_1, P_2, \ldots, P_r\}$
for all  $n\geq 1.$ Thus $(u^n)=\cap_{i=1}^rP_i^{(n)}$ is integrally closed. Hence  ${I^n}=(u^n)\cap R$  is integrally closed for all $n.$
\end{proof}

\begin{lemma}\label{chernface}
 Let $\Delta$ be a $(d-1)-$dimensional simplicial complex.
Let $\m$ denote the maximal homogeneous ideal of the face ring
$k[\Delta]$ over a field $k.$ Then $\m^n=\ov{\m}^n$ for all $n.$ and 
$$e_1(\m)=\ov{e}_1(\m)=df_{d-1}-f_{d-2}.$$
\end{lemma}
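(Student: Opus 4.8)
The plan is to treat the two assertions separately. The equality $\m^n=\ov{\m^n}$ for all $n$ is immediate from the preceding lemma once we observe that the associated graded ring $G(\m)$ is reduced: since $k[\Delta]$ is a standard graded $k$-algebra and $\m$ is its irrelevant maximal ideal, we have $\m^n=\bigoplus_{j\ge n}k[\Delta]_j$, so $G(\m)=\bigoplus_{n\ge0}\m^n/\m^{n+1}$ is isomorphic as a graded ring to $k[\Delta]$ itself; and $k[\Delta]$ is reduced because $I_\Delta$, being a squarefree monomial ideal, is radical. This equality also forces $\ov e_1(\m)=e_1(\m)$, so it then remains only to compute the ordinary Chern number $e_1(\m)$.

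For that, I would start from the classical Hilbert series identity $\sum_{n\ge0}\dim_k k[\Delta]_n\,t^n=\sum_{i=-1}^{d-1}f_i\,t^{i+1}/(1-t)^{i+1}$. Multiplying through by $1/(1-t)$ replaces the coefficient sequence by $n\mapsto\lm(k[\Delta]/\m^{n+1})$, and bringing the terms to the common denominator $(1-t)^{d+1}$ gives
$$\sum_{n\ge0}\lm(k[\Delta]/\m^{n+1})\,t^n=\frac{g(t)}{(1-t)^{d+1}},\qquad g(t)=\sum_{j=0}^{d}f_{j-1}\,t^{j}(1-t)^{d-j}.$$
Since $\deg g\le d$, the right-hand coefficient of $t^n$ is a polynomial in $n$ for all $n\ge0$, so by uniqueness that polynomial is $P_\m=\ov P_\m$. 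Comparing with the normalized form $P_\m=\sum_{i=0}^{d}(-1)^ie_i(\m)\binom{X+d-i}{d-i}$, whose generating function is $\big(\sum_{i=0}^{d}(-1)^ie_i(\m)(1-t)^i\big)/(1-t)^{d+1}$, I would deduce the polynomial identity $g(t)=\sum_{i=0}^{d}(-1)^ie_i(\m)(1-t)^i$, hence $e_0(\m)=g(1)$ and $e_1(\m)=g'(1)$. Evaluating: at $t=1$ every summand $t^j(1-t)^{d-j}$ of $g$ vanishes except the one with $j=d$, so $g(1)=f_{d-1}$; and $\frac{d}{dt}\!\left(t^j(1-t)^{d-j}\right)\big|_{t=1}$ vanishes except for $j=d$ (value $d$) and $j=d-1$ (value $-1$), so $g'(1)=df_{d-1}-f_{d-2}$. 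This gives $e_1(\m)=\ov e_1(\m)=df_{d-1}-f_{d-2}$.

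I do not expect a genuine obstacle: both inputs — the reducedness of $G(\m)\cong k[\Delta]$ and the Hilbert series of a face ring — are standard. The points that need a little care are the passage from the Hilbert series of $k[\Delta]$ to the generating function of $n\mapsto\lm(k[\Delta]/\m^{n+1})$ and the attendant identification of $g(t)$ with $\sum_i(-1)^ie_i(\m)(1-t)^i$ (legitimate because both sides are polynomials of degree $\le d$ with the same power series expansion after division by $(1-t)^{d+1}$), together with the bookkeeping at the extreme index $j=0$: this uses the convention $f_{-1}=1$ and makes the computation also cover the case $d=1$ without change.
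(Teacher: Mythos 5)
Your proposal is correct and follows essentially the same route as the paper: reducedness of $G(\m)\cong k[\Delta]$ combined with the preceding lemma gives $\m^n=\ov{\m^n}$, and then the Hilbert series of the face ring with numerator $\sum_{j}f_{j-1}t^j(1-t)^{d-j}$ yields $e_1(\m)=df_{d-1}-f_{d-2}$ as the derivative at $t=1$. The only difference is that you verify by hand, via the generating function of $n\mapsto\lm(k[\Delta]/\m^{n+1})$, the facts the paper simply cites from Bruns--Herzog (Lemma 5.1.8 and Proposition 4.1.9), and your evaluation $g(1)=f_{d-1}$, $g'(1)=df_{d-1}-f_{d-2}$ is accurate.
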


\begin{proof}
 
Since $k[\Delta]$ is standard graded $k-$algebra, $G(\m)=k[\Delta].$ Hence $G(\m)$ is reduced and consequently
$\m$ is a normal ideal. Moreover, $\lm(\m^n/\m^{n+1})=\dim_kk[\Delta]_n.$ 
The Hilbert Series of the face ring is written as  $$H(k[\Delta],t)=\frac{h_0+h_1t+\cdots+h_st^s}{(1-t)^d}.$$ Put $h(t)=h_0+h_1t+\cdots + h_st^s$ where 
the face vector $(f_1, f_0, \ldots,f_{d-1})$ and the $h$-vector are related by the equation
$$\sum_{i=0}^sh_it^i=\sum_{i=0}^df_{i-1}t^i(1-t)^{(d-i)}$$
by \cite[Lemma 5.1.8]{bh}.
Then by \cite[Proposition 4.1.9]{bh} we have 
$$e_1(\m)=h'(1)=df_{d-1}-f_{d-2}.$$ 
\end{proof}

\begin{theorem}
 Let $\Delta$ be a  pure simplicial complex. Then $$\ov{e}_1(\m)=e_1(\m)\geq 0.$$
\end{theorem}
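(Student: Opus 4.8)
The plan is to combine the explicit formula from Lemma~\ref{chernface} with the defining relation between the $h$-vector and the $f$-vector for a pure simplicial complex. By Lemma~\ref{chernface} we already have $\ov{e}_1(\m)=e_1(\m)=df_{d-1}-f_{d-2}$, so the only thing left to prove is the inequality $df_{d-1}\geq f_{d-2}$. The key observation is combinatorial: in a pure $(d-1)$-dimensional complex every $(d-2)$-dimensional face is contained in at least one $(d-1)$-dimensional face (purity is exactly what guarantees there are no maximal faces of dimension $<d-1$), and each $(d-1)$-face has exactly $d$ faces of dimension $d-2$ (its codimension-one subsets). Counting incident pairs $(\sigma,\tau)$ with $\dim\sigma=d-2$, $\dim\tau=d-1$, $\sigma\subset\tau$, from the $\tau$ side gives exactly $d f_{d-1}$, and from the $\sigma$ side gives $\sum_{\sigma} \#\{\tau\supset\sigma\} \geq f_{d-2}$ since each summand is $\geq 1$. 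Hence $df_{d-1}\geq f_{d-2}$, which yields $\ov{e}_1(\m)\geq 0$.

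First I would invoke Lemma~\ref{chernface} to reduce to proving $df_{d-1}-f_{d-2}\geq 0$. Then I would set up the double-counting argument on the set of flags $\sigma\subset\tau$ with $\dim\sigma = d-2$ and $\dim\tau = d-1$. Counting by $\tau$: each facet $\tau$ (which is a set of $d$ vertices) has exactly $\binom{d}{d-1}=d$ faces of dimension $d-2$, and all of these lie in $\Delta$ since $\Delta$ is a simplicial complex; this gives $d f_{d-1}$ flags. Counting by $\sigma$: each $(d-2)$-face $\sigma$ contributes the number of facets containing it, and by purity this number is at least $1$; summing over the $f_{d-2}$ such faces gives at least $f_{d-2}$ flags. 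Equating the two counts gives $d f_{d-1}\geq f_{d-2}$, as desired. If $d=1$ the statement is trivial since $f_{-1}=1$ and $f_0\geq 1$ (or one handles it directly), and for $d\geq 2$ the counting argument applies cleanly.

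I do not expect a serious obstacle here; the only point requiring care is making sure purity is used correctly. The hypothesis ``pure'' means every maximal face (facet) has dimension $d-1$; this is precisely what is needed to conclude that an arbitrary $(d-2)$-face, being contained in some facet, is contained in a facet of dimension $d-1$ rather than one of lower dimension. Without purity a $(d-2)$-face could itself be a facet, contributing zero to the count from the $\tau$ side while still contributing to $f_{d-2}$, which is exactly the mechanism behind the negativity example mentioned in the introduction. So the proof is short: quote Lemma~\ref{chernface}, then give the double-count establishing $df_{d-1}\geq f_{d-2}$.
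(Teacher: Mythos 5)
Your proposal is correct and is essentially the paper's own argument: both reduce via Lemma~\ref{chernface} to the inequality $df_{d-1}\geq f_{d-2}$ and prove it by counting, for each facet, its $d$ codimension-one faces, using purity to guarantee every $(d-2)$-face is covered by some facet. Your flag double-count is just a slightly more explicit phrasing of the same idea.
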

\begin{proof}
 Let $\dim \Delta =d-1.$ We prove that if $\Delta$ is a pure simplicial complex then $df_{d-1}\geq f_{d-2}$. Let $\sigma$ be a facet. For any $v_i\in\sigma=\{v_1,\ldots , v_d\}$, $\sigma\setminus \{v_i\}$ is a $(d-2)-$dimensional face and $\sigma\setminus \{v_i\}$ are distinct for all $i=1,\ldots ,d$. Therefore each facet gives rise to $d$, $(d-2)$-dimensional faces. But different facets may produce same faces of dimension $d-2$. Since $\Delta$ is pure each $(d-2)-$dimensional face is contained in a facet. Hence $df_{d-1}\geq f_{d-2}$. Therefore
$\ov{e}_1(\m)\geq 0$ by Lemma \ref{chernface} 
\end{proof}

\begin{example}
The above theorem indicates that the the maximal homogeneous ideal of the  face ring of a non-pure simplicial complexe may have negative Chern number. Indeed, consider the simplicial complex 
$\Delta_n$ on the vertices $\{v_1,v_2, \ldots v_{n+2}\}$ where $n \geq 2$  and $$\Delta_n=\{\{v_1,v_2\},v_3, \ldots,v_{n+2} \}.$$
Then $e_1(\m)=df_{d-1}-f_{d-2}=-n.$
Hence we need to add the assumption of quasi-unmixedness on the ring in Vasaconccelos' Positivity conjecture.
\end{example}

 \section{The Negativity Conjecture of Vasconcelos}
 
In this section we show that the Chern number of any parameter
ideal with respect to a finite module over a Noetherian local ring is non-negative. For this purpose, we need to generalize a result of Goto-Nishida \cite[Lemma 2.4]{gn} to  modules.
 \begin{proposition}
\label{p1}
 Let $(R,\m)$ be a Noetherian local ring and let $M$ be a finite $R$-module with $\dm M=1$. If $a$ is a parameter for $M$ then $$e_1((a),M)=-\lm(H^0_{\m}(M)).$$
\end{proposition}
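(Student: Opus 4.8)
The plan is to reduce to the module $\bar M = M/H^0_{\m}(M)$, which has depth at least $1$ and dimension $1$, hence is Cohen--Macaulay, and then compare Hilbert coefficients of $M$ and $\bar M$ with respect to the parameter $(a)$. First I would record the short exact sequence
\[
0 \longrightarrow H^0_{\m}(M) \longrightarrow M \longrightarrow \bar M \longrightarrow 0,
\]
and note that $N := H^0_{\m}(M)$ has finite length, so $\dm N = 0 < \dm M = 1$. Tensoring with $R/(a)^{n+1}$ gives, for each $n$, an exact sequence
\[
N/(a)^{n+1}N \longrightarrow M/(a)^{n+1}M \longrightarrow \bar M/(a)^{n+1}\bar M \longrightarrow 0,
\]
and there is also a kernel term $\Tor_1^R(\bar M, R/(a)^{n+1})$ on the left. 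Since $a$ is a nonzerodivisor on $\bar M$ (as $\bar M$ has positive depth and $a$ is a parameter), one computes $\Tor_1^R(\bar M,R/(a)^{n+1}) = (0 :_{\bar M} a^{n+1}) = 0$. Also $(a)^{n+1}N = 0$ for $n \gg 0$ because $N$ has finite length and is $\m$-torsion, so $N/(a)^{n+1}N = N$ for large $n$. Hence for $n \gg 0$ we get
\[
\lm(M/(a)^{n+1}M) = \lm(\bar M/(a)^{n+1}\bar M) + \lm(N),
\]
which at the level of Hilbert polynomials reads $P_{(a)}(M,X) = P_{(a)}(\bar M, X) + \lm(N)$.

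Next I would extract the Chern numbers. Since $\dm M = \dm \bar M = 1$, adding the constant $\lm(N)$ to the Hilbert polynomial changes $e_1$ by exactly $-\lm(N)$: writing $P = e_0\binom{X+1}{1} - e_1$ in degree $1$, the constant term is $-e_1$, so $-e_1((a),M) = -e_1((a),\bar M) + \lm(N)$, i.e.
\[
e_1((a),M) = e_1((a),\bar M) - \lm(N) = e_1((a),\bar M) - \lm(H^0_{\m}(M)).
\]
It then remains to show $e_1((a),\bar M) = 0$. This is the Cohen--Macaulay case: $\bar M$ is a $1$-dimensional Cohen--Macaulay module and $(a)$ is a parameter (equivalently a reduction-free minimal reduction), so $a$ is $\bar M$-regular and $(a)^{n+1}\bar M \cap \text{(torsion)} $ issues vanish, giving $\lm(\bar M/(a)^{n+1}\bar M) = (n+1)\lm(\bar M/a\bar M) = (n+1)\,e_0((a),\bar M)$ for all $n \ge 0$; thus the Hilbert polynomial has zero constant term and $e_1((a),\bar M) = 0$. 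Combining, $e_1((a),M) = -\lm(H^0_{\m}(M))$, as claimed.

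The main obstacle, and the step I would be most careful about, is the vanishing $\Tor_1^R(\bar M, R/(a)^{n+1}) = 0$ together with the claim that $a$ acts as a nonzerodivisor on $\bar M$: one must confirm that the image of $a$ in $R$ avoids every associated prime of $\bar M$, which holds precisely because $\dm \bar M = 1$, $\bar M$ has no $\m$-torsion (so $\m \notin \Ass \bar M$), and $a$ is part of a system of parameters for $M$ hence $\m$-primary in the relevant sense --- so any prime in $\Ass\bar M$ has dimension $1$ and cannot contain $a$. The only other point needing attention is the bookkeeping of how a constant shift in a degree-$1$ integer-valued polynomial affects $e_1$ versus $e_0$; since $\dm M = 1$ is assumed throughout, $e_0$ is unaffected and the entire shift lands on $-e_1$, which is exactly what makes the finite-length correction term appear with the stated sign.
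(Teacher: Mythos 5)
Your proof is correct and follows essentially the same route as the paper: pass to $\bar M=M/H^0_{\m}(M)$, which is one-dimensional Cohen--Macaulay, and for $n\gg 0$ use the length-additive exact sequence $0\to H^0_{\m}(M)\to M/a^{n}M\to \bar M/a^{n}\bar M\to 0$ to shift the Hilbert polynomial by the constant $\lm(H^0_{\m}(M))$, which lands entirely on $-e_1$. The only (harmless) difference is how left-exactness is obtained: you deduce it from the vanishing of $\Tor_1^R(\bar M,R/(a^{n+1}))$ via $a$ being $\bar M$-regular (note that in general this Tor is $(0:_{\bar M}a^{n+1})/(0:_R a^{n+1})\bar M$, which still vanishes even if $a^{n+1}$ is a zerodivisor on $R$), whereas the paper gets $a^{n}M\cap H^0_{\m}(M)=0$ for $n\gg 0$ directly from the Artin--Rees lemma.
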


\begin{proof}
Let $N=H^0_{\m}(M)$ and $\ov M=M/N$. Notice that $H^0_{\m}(\ov M)=0$ and $\dm \ov M=\dm M=1$, which implies $\depth \ov M=1$. Thus $\ov M$ is Cohen-Macaulay $R$-module. 
 Consider the exact sequence 
 $$0\longrightarrow N\longrightarrow M\longrightarrow \ov M\longrightarrow 0.$$ 
By taking tensor product with $R/(a)^n$ we get the  exact sequence for all $n\geq 1$
\begin{equation}\label{eq}
 0\longrightarrow \ker \phi_n \longrightarrow N/a^nN\xrightarrow{\phi_n} M/a^nM\longrightarrow \ov M/a^n\ov M\longrightarrow 0.
 \end{equation}
By Artin-Rees Lemma, there is a $k$ such that $$a^nM\cap N= a^{n-k}(a^kM\cap N)\subseteq a^{n-k} N\subseteq \m^{n-k}N=0$$ for large $n$. Hence $\ker \phi_n =0$ for all large $n$.
Thus, for all large $n,$  we get the  exact sequence:
 $$0\longrightarrow N\longrightarrow M/{a}^nM\longrightarrow \ov M/{a}^n\ov M\longrightarrow 0.$$ 
 Hence we have $\lm(N)+\lm(\ov M/ a^n\ov M)=\lm(M/ a^nM)$. Since $\ov M$ is Cohen-Macaulay, $$\lm(\ov M/ a^n\ov M)=e_0( (a^n),\ov M)=e_0((a),\ov M)n=e_0((a),M)n.$$ For large $n$, $\lm(M/ a^nM)=ne_0((a),M)-e_1((a),M)$. Therefore $$e_1((a),M)=-\lm(H^0_{\m}(M)).$$
\end{proof}

\begin{corollary}
 Let $(R,\m)$ be a Noetherian local ring and $M$ be a finite $R$-module with $\dm M=1$. Let $a$ be a parameter for $M$. Then $e_1((a),M)=0$ if and only if $M$ is a Cohen-Macaulay module.
\end{corollary}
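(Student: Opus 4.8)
The plan is to read this off Proposition \ref{p1} with essentially no extra work. That proposition gives the identity $e_1((a),M) = -\lambda(H^0_{\mathfrak{m}}(M))$, valid because $a$ is a parameter for $M$ (so $\lambda(M/aM)<\infty$ and the Hilbert--Samuel polynomial $P_{(a)}(M,X)$ has degree $\dim M = 1$, hence $e_1((a),M)$ makes sense). Since $\lambda(-)$ takes nonnegative integer values and vanishes precisely on the zero module, I would first observe that $e_1((a),M)=0$ holds if and only if $H^0_{\mathfrak{m}}(M)=0$.

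The second step is to translate the vanishing of $H^0_{\mathfrak{m}}(M)$ into the Cohen--Macaulay condition. Recall that $H^0_{\mathfrak{m}}(M)=\{x\in M : \mathfrak{m}^k x = 0 \text{ for some } k\}$ is the largest submodule of $M$ of finite length, and it is nonzero exactly when $\mathfrak{m}\in\Ass M$, i.e. exactly when $\depth M = 0$. Therefore $H^0_{\mathfrak{m}}(M)=0$ is equivalent to $\depth M\geq 1$. Because $\dim M = 1$ by hypothesis and $\depth M\leq\dim M$ in general, this is the same as $\depth M = \dim M = 1$, which is the definition of $M$ being a Cohen--Macaulay $R$-module.

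Combining the two equivalences from the previous paragraphs yields $e_1((a),M)=0 \iff H^0_{\mathfrak{m}}(M)=0 \iff M$ is Cohen--Macaulay, which is the assertion. I do not anticipate any genuine obstacle: the substance is entirely in Proposition \ref{p1}, and the only thing to be slightly careful about is making sure the hypothesis "$a$ is a parameter for $M$" is invoked so that Proposition \ref{p1} applies verbatim and the coefficient $e_1((a),M)$ is defined in the first place. One could also note, for emphasis, that this simultaneously recovers the Cohen--Macaulay case where $e_1((a),M)=0$ and exhibits $-e_1((a),M)=\lambda(H^0_{\mathfrak{m}}(M))$ as a precise numerical measure of the failure of $M$ to be Cohen--Macaulay in dimension one.
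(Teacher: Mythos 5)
Your argument is correct and is exactly the intended one: the paper states this corollary without proof as an immediate consequence of Proposition \ref{p1}, namely $e_1((a),M)=-\lm(H^0_{\m}(M))=0$ iff $H^0_{\m}(M)=0$ iff $\depth M\geq 1=\dim M$. Nothing further is needed.
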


In order to investigate the Chern number
for finite modules of dimension $d \geq 2$ we use induction on dimension. The principal tool for this purpose is the concept of superficial element of an ideal with respect to a module.
The next theorem  is found in Nagata \cite[22.6]{nag} for Noetherian local rings.
It is proved for modules over Noetherian local rings in \cite{m}.

 \noindent
{\bf Nagata's Theorem:} Let $(A,\m)$ be a Noetherian local ring and $M$ be a finite $A$-module with $\dm M=d\geq 2$. Let $I$ be an ideal of definition of $M$ and let $a$ be a superficial element for $I$ with respect to $M$. Set $\ov M=M/aM$. Then
$$ P_{\ov I}(\ov M,n)=\bigtriangleup P_I(M,n)+\lm(0:_Ma).$$
In particular,  
  \begin{equation*}
e_i(\ov I,\ov M)=\left \{\begin{array}{llll}
e_i(I,M) & \mbox{if $0\leq i< d-1$}.\\
e_{d-1}(I,M)+(-1)^{d-1}\lm(0:_Ma) & \mbox{if $i=d-1$}.
\end{array} \right.
\end{equation*}

\begin{lemma}\label{ll1}
 Let $(R,\m)$ be a Noetherian local ring and $M$ be a finite $R$-module with $\dm M=d$. Let $I$ be an ideal of definition for $M$ generated by  ${\bf x}=x_1,\ldots, x_d$ which is a  superficial sequence for $I$  with respect to $M$. If $M$ is not Cohen-Macaulay then $M/x_1M$ is not Cohen-Macaulay.
\end{lemma}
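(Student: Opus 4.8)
The plan is to prove the contrapositive: assuming $M/x_1M$ is Cohen-Macaulay, we show that $M$ is Cohen-Macaulay. Since $I=(x_1,\dots,x_d)$ is an ideal of definition of $M$ generated by exactly $d=\dm M$ elements, it is a parameter ideal of $M$; consequently $\dm(M/x_1M)=d-1$, because a partial system of parameters drops the dimension by exactly one at each stage. We read the statement for $d\ge 2$, so that $\dm(M/x_1M)\ge 1$ and the Cohen-Macaulayness of $M/x_1M$ forces $H^0_{\m}(M/x_1M)=0$. Granting this, everything reduces to showing that $x_1$ is a nonzerodivisor on $M$: once $0:_Mx_1=0$ we have $\depth(M/x_1M)=\depth M-1$ while $\dm(M/x_1M)=\dm M-1$, so the equality $\depth(M/x_1M)=\dm(M/x_1M)$ gives $\depth M=\dm M$, i.e. $M$ is Cohen-Macaulay.

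First I would observe that superficiality makes $L:=0:_Mx_1$ a module of finite length. By definition there is an integer $c$ with $(I^{n+1}M:_Mx_1)\cap I^cM=I^nM$ for all $n\gg 0$. Since $L\subseteq(I^{n+1}M:_Mx_1)$ for every $n$, intersecting with $I^cM$ gives $L\cap I^cM\subseteq\bigcap_nI^nM=0$ by Krull's intersection theorem. Hence $L$ embeds into $M/I^cM$, which has finite length because $I$ is an ideal of definition of $M$; so $\lm(L)<\infty$, and therefore $L\subseteq H^0_{\m}(M)$.

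Next I would split the four-term exact sequence
$$0\longrightarrow L\longrightarrow M\xrightarrow{x_1}M\longrightarrow M/x_1M\longrightarrow 0$$
into the two short exact sequences $0\to L\to M\to x_1M\to 0$ and $0\to x_1M\to M\to M/x_1M\to 0$, and apply the left exact functor $H^0_{\m}(-)$ to each. From the first, using that $L$ is of finite length (so $H^0_{\m}(L)=L$ and $H^1_{\m}(L)=0$), one gets a short exact sequence $0\to L\to H^0_{\m}(M)\to H^0_{\m}(x_1M)\to 0$. From the second, using $H^0_{\m}(M/x_1M)=0$, one gets an isomorphism $H^0_{\m}(x_1M)\cong H^0_{\m}(M)$. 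As $H^0_{\m}(M)$ has finite length, comparing lengths in the first sequence forces $\lm(L)=0$, so $L=0$; thus $x_1$ is a nonzerodivisor on $M$, and the proof concludes as in the first paragraph.

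The step I expect to be the crux is precisely this last one, proving that $x_1$ is a nonzerodivisor on $M$. A prime-avoidance argument is not available, since when $\depth M=0$ a superficial element can genuinely be a zerodivisor; the mechanism above instead exploits that the hypothesis on $M/x_1M$ collapses the relevant local-cohomology lengths. It is essential here that $x_1$ be superficial and not merely a parameter of $M$, for a mere parameter element $x$ of $M$ may have $0:_Mx$ of infinite length; correspondingly the statement is of interest only for $d\ge 2$, since for $d=1$ the quotient $M/x_1M$ is zero-dimensional and hence Cohen-Macaulay regardless of $M$.
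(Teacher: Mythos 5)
Your proof is correct (read, as you say, for $d\ge 2$, which is the only case in which the statement can hold and the only case the paper actually uses), but it proceeds by a genuinely different mechanism than the paper's. The paper argues by contradiction through multiplicities: if $\overline M=M/x_1M$ were Cohen--Macaulay, then $\overline{x_2},\ldots,\overline{x_d}$ would be an $\overline M$-regular sequence, so $\lm(M/({\bf x})M)=\lm\bigl(\overline M/(\overline{x_2},\ldots,\overline{x_d})\overline M\bigr)=e_0\bigl((\overline{x_2},\ldots,\overline{x_d}),\overline M\bigr)$, and by superficiality of $x_1$ (Nagata's theorem, $d\ge 2$) this equals $e_0({\bf x},M)$; the equality of length and multiplicity for a parameter ideal then forces $M$ to be Cohen--Macaulay. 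You instead prove the contrapositive by showing that $x_1$ is actually a nonzerodivisor on $M$: superficiality gives $\lm(0:_Mx_1)<\infty$ (your Artin--Rees/Krull argument is the standard one), and the two local-cohomology sequences obtained from splitting $0\to(0:_Mx_1)\to M\xrightarrow{x_1}M\to M/x_1M\to 0$, together with $H^0_{\m}(M/x_1M)=0$, force $\lm(0:_Mx_1)=0$ by comparing finite lengths; then depth counting finishes. The paper's route is shorter given the standard machinery (the length-equals-multiplicity criterion and the invariance of $e_0$ under a superficial element) and uses the whole superficial sequence; yours uses only the superficiality of $x_1$, avoids multiplicity theory in favor of local cohomology, and yields the stronger intermediate fact that $x_1$ is $M$-regular (indeed your length identities also give $H^0_{\m}(M)\cong H^0_{\m}(x_1M)$ and hence, with a little more work, $H^0_{\m}(M)=0$). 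Your closing remarks — that superficiality, not mere parameterhood, is what bounds $\lm(0:_Mx_1)$, and that the $d=1$ case is vacuous/false since a finite-length quotient is automatically Cohen--Macaulay — are accurate and in fact make explicit a restriction the paper leaves implicit.
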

\begin{proof}
 Suppose $\ov M=M/x_1M$ is Cohen-Macaulay. Then $\ov{x_2},\ldots ,\ov{x_d}$ is an $\ov M$-regular sequence. Thus $\lm(\ov M/(\ov{x_2},\ldots \ov{x_d})\ov M)=e_0(\ov{x_2},\ldots \ov{x_d},\ov M).$ Since $x_1$ is superficial for $M$, $e_0({\bf x},M)=e_0(\ov{x_2},\ldots ,\ov{x_d},M)$. Hence $\lm(M/({\bf x})M)=e_0({\bf x},M)$. Therefore $M$ is Cohen-Macaulay which is  a contradiction.
\end{proof}

\begin{proposition}
 Let $(R,\m)$ be a Noetherian local ring and $M$ be a finite $R$-module with $\dm M=d$ and $\depth M=d-1$. Let $J$ be generated by a system of parameters for $M$. Then $e_1(J,M)<0$. 
\end{proposition}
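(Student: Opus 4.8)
The plan is to reduce to the one-dimensional case already handled by Proposition \ref{p1} via induction on $d=\dm M$, using a superficial sequence and Nagata's Theorem. The base case $d=1$ cannot occur under the hypothesis $\depth M = d-1 = 0$ because then $\dm M = 1$ but $\depth M = 0$; here Proposition \ref{p1} gives $e_1((a),M) = -\lm(H^0_{\m}(M)) < 0$ since $H^0_{\m}(M)\neq 0$ (as $\depth M = 0$). So the genuine base case is $d=1$, and it is already done. For $d\geq 2$, choose a superficial sequence ${\bf x}=x_1,\ldots,x_d$ for $J$ with respect to $M$ generating a parameter ideal (after possibly replacing $J$ by a parameter ideal generated by such a sequence, which is harmless since one only needs the statement for some system of parameters — but actually the proposition is stated for $J$ generated by a given system of parameters, so I should be slightly careful: I would first note that any system of parameters can, after a general linear change of coordinates over an infinite residue field, be taken to be superficial; the general case reduces to the infinite residue field case by the standard trick $R \to R[t]_{\m R[t]}$, which changes neither depth nor the Hilbert coefficients).

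The key inductive step runs as follows. Set $\ov M = M/x_1 M$. Since $x_1$ is superficial and $\dm M = d \geq 2$, we have $\dm \ov M = d-1$. The hypothesis $\depth M = d-1$ means in particular $x_1$ is a nonzerodivisor on $M$ (one can arrange $x_1$ to be part of a maximal $M$-regular sequence; more precisely, since $\depth M \geq 1$, a general superficial element is $M$-regular), so $0:_M x_1 = 0$ and $\depth \ov M = \depth M - 1 = d - 2 = \dm \ov M - 1$. Thus $\ov M$ satisfies the hypotheses of the proposition with dimension $d-1$, and the induction hypothesis gives $e_1(\ov J, \ov M) < 0$ where $\ov J = (x_2,\ldots,x_d)$. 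By Nagata's Theorem applied to the parameter ideal $J$ (so $I = J$, which is an ideal of definition of $M$), since $0:_M x_1 = 0$ we get $e_1(\ov J, \ov M) = e_1(J,M)$ when $d - 1 \geq 2$, i.e. $d \geq 3$; and when $d = 2$, Nagata's formula in the case $i = d-1 = 1$ gives $e_1(\ov J,\ov M) = e_1(J,M) + (-1)^{1}\lm(0:_M x_1) = e_1(J,M)$, again using $0:_M x_1 = 0$. Wait — I must recheck the index: Nagata's Theorem says $e_i(\ov I,\ov M) = e_i(I,M)$ for $0 \leq i < d-1$ and $e_{d-1}(\ov I,\ov M) = e_{d-1}(I,M) + (-1)^{d-1}\lm(0:_M a)$. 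So for $d \geq 3$, the index $1$ falls in the range $0 \leq 1 < d-1$, giving $e_1(\ov J,\ov M) = e_1(J,M)$ directly. For $d = 2$, index $1 = d-1$, and the correction term is $(-1)^{1}\lm(0:_M x_1) = 0$. In all cases $e_1(J,M) = e_1(\ov J,\ov M) < 0$, completing the induction.

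The main obstacle I anticipate is the bookkeeping around superficial elements: ensuring that a single superficial element can simultaneously be taken $M$-regular (needed so that $0:_M x_1 = 0$ and $\depth \ov M = d-2$) and that the quotient $\ov M$ genuinely has $\depth \ov M = \dm \ov M - 1$ rather than something larger — this is where the hypothesis $\depth M = d - 1$ (not $\depth M = d$, which would force $M$ Cohen-Macaulay and $e_1 = 0$) is essential. One needs: a general superficial element $x_1$ is a nonzerodivisor on $M$ (possible since $\depth M \geq 1$, so $\m \notin \Ass M$ and a general element avoids all associated primes), and then $\depth(M/x_1 M) = \depth M - 1 = d-2$ exactly. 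The fact that this is exactly $\dm \ov M - 1$ and not more is automatic since $\depth \ov M \leq \dm \ov M = d-1$ and we computed $\depth \ov M = d-2$. A secondary technical point is the passage to an infinite residue field to guarantee existence of superficial sequences with the required genericity; this is routine and does not affect depth or Hilbert coefficients. With these points in place, the argument is a clean descending induction anchored at $d=1$ by Proposition \ref{p1}.
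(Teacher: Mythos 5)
Your proof is correct and takes essentially the same route as the paper: induction on $d$, passing to $\ov M=M/x_1M$ for a superficial element $x_1$ (automatically $M$-regular since $\depth M\geq 1$), using Nagata's Theorem to get $e_1(\ov J,\ov M)=e_1(J,M)$, with the $d=1$ case anchored by Proposition \ref{p1}. The only cosmetic difference is that you track $\depth \ov M=d-2$ directly to apply the induction hypothesis, whereas the paper treats $d=2$ and $d\geq 3$ separately and invokes Lemma \ref{ll1}; your bookkeeping is, if anything, slightly more explicit.
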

\begin{proof}
 Apply induction on $d$. The $d=1$ case is already done. Suppose $d=2$. Let $J=(a,b)$. We may assume that $(a,b)$ is a superficial sequence for $J$ with respect to $M$ and since $\depth M=1$, $a$ is $M$-regular. Let $\ov M=M/aM$. Then $\dm \ov M=1$. By Nagata's Theorem, we have $e_1(\ov J,\ov M)=e_1 (J,M).$ By Lemma \ref{ll1}, $\ov M$ is not Cohen-Macaulay. Thus 
  $e_1(\ov J,\ov M)<0$. Therefore $e_1(J,M)<0$. 

Next assume that $d\geq 3$ and $J=(x_1,\ldots ,x_d)$  where $x_1,\ldots ,x_d$ is a superficial sequence with respect to $M$. Let $\ov M=M/x_1M$ then $\dm \ov M=d-1$. By Nagata's Theorem  we get $e_1(\ov J,\ov M)=e_1(J,M)$. If $M$ is not Cohen-Macaulay then $\ov M$ is also not Cohen-Macaulay and hence by induction hypothesis $e_1(\ov J,\ov M)<0$, which implies $e_1(J,M)<0$. 
 \end{proof}

\begin{theorem}
 Let $(R,\m)$ be a Noetherian local ring and $M$ be a finite $R$-module
  with $\dm M=d$. Let $J$ be an ideal generated by  a system of parameters for $M$. Then $e_1(J,M)\leq 0.$
\end{theorem}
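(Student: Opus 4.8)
The plan is to reduce to the known cases by induction on $d=\dm M$, using superficial elements and Nagata's Theorem exactly as in the preceding Proposition. First I would dispose of the small-dimensional cases directly: when $d=1$, Proposition \ref{p1} gives $e_1((a),M)=-\lm(H^0_\m(M))\leq 0$; and when $\depth M=d-1$ the immediately preceding Proposition already yields $e_1(J,M)<0\leq 0$. These together cover $d=1$ and, more generally, give the base of the induction and one of the two branches of the inductive step.

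For the inductive step with $d\geq 2$, I would argue as follows. After possibly passing to a minimal reduction and enlarging the residue field (a standard device that does not change the Hilbert coefficients), I may assume $J=(x_1,\ldots,x_d)$ where $x_1,\ldots,x_d$ is a superficial sequence for $J$ with respect to $M$. Set $\ov M=M/x_1M$ and $\ov J=(x_2,\ldots,x_d)\ov M$, so $\dm\ov M=d-1$ and $\ov J$ is generated by a system of parameters for $\ov M$. Nagata's Theorem, applied with $i=1<d-1$ when $d\geq 3$, gives $e_1(\ov J,\ov M)=e_1(J,M)$; by the induction hypothesis $e_1(\ov J,\ov M)\leq 0$, hence $e_1(J,M)\leq 0$. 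The only remaining case is $d=2$: here $i=1=d-1$, so Nagata's Theorem instead gives $e_1(\ov J,\ov M)=e_1(J,M)-\lm(0:_Mx_1)$, i.e. $e_1(J,M)=e_1(\ov J,\ov M)+\lm(0:_Mx_1)$, which does not immediately give the sign we want since $\ov M$ has dimension $1$ and $e_1(\ov J,\ov M)=-\lm(H^0_\m(\ov M))$ by Proposition \ref{p1}.

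So the main obstacle is the $d=2$ step, where the superficial-element reduction loses the inequality because of the correction term $\lm(0:_Mx_1)$. To handle it I would split on $\depth M$: if $\depth M=1=d-1$ the preceding Proposition already gives $e_1(J,M)<0$; if $\depth M=2=d$ then $M$ is Cohen-Macaulay, $x_1$ is $M$-regular so $0:_Mx_1=0$ and $\ov M$ is again Cohen-Macaulay of dimension $1$, whence $e_1(\ov J,\ov M)=0$ and $e_1(J,M)=0$; and if $\depth M=0$ then, writing $N=H^0_\m(M)$ and $M'=M/N$, one has $\dm M'=2$, $\depth M'\geq 1$, and $0:_{M'}(J)$ has finite length, so I would compare the Hilbert polynomials of $M$ and $M'$ via the exact sequence $0\to N\to M\to M'\to 0$: since $N$ has finite length it contributes $0$ to $e_0$ and $e_1$ in dimension $2$, giving $P_J(M,n)=P_J(M',n)$ for $n\gg0$ and hence $e_1(J,M)=e_1(J,M')$, reducing to the $\depth\geq 1$ subcases.

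Alternatively — and this is probably the cleaner writeup — I would avoid the case split on $\depth M$ by first replacing $M$ with $M/H^0_\m(M)$ (legitimate in any dimension $d\geq 1$, since $H^0_\m(M)$ has finite length and therefore does not affect $e_0,\ldots,e_{d-1}$, in particular not $e_1$, once $d\geq 2$, while the $d=1$ case is covered by Proposition \ref{p1}), so that I may assume $\depth M\geq 1$ and hence $x_1$ is $M$-regular with $0:_Mx_1=0$. Then Nagata's Theorem gives $e_1(\ov J,\ov M)=e_1(J,M)$ in all cases $d\geq 2$ (the correction term vanishes), and the induction closes cleanly: $\ov M=M/x_1M$ has dimension $d-1$, $\ov J$ is a parameter ideal for $\ov M$, and $e_1(J,M)=e_1(\ov J,\ov M)\leq 0$ by the induction hypothesis (with the base case $d-1=1$ handled by Proposition \ref{p1} applied to $\ov M$, giving $e_1(\ov J,\ov M)=-\lm(H^0_\m(\ov M))\leq 0$). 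The one point requiring a line of justification is that passing to $M/H^0_\m(M)$ does not change $e_1$ when $d\geq 2$, which is immediate from the length-finiteness of $H^0_\m(M)$ and additivity of Hilbert polynomials along short exact sequences.
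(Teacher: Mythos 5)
Your argument is correct, but it handles the decisive case differently from the paper. The paper's proof also inducts on $d$ via superficial elements and Nagata's Theorem, but it treats $d=2$ head-on without assuming positive depth: there the correction term $\lm(0:_Mx)$ is absorbed by an explicit local cohomology computation, comparing the sequences obtained by applying $H^0_{\m}(-)$ to $0\to M/(0:_Mx)\xrightarrow{x} M\to M/xM\to 0$ and $0\to (0:_Mx)\to M\to M/(0:_Mx)\to 0$, which yields the sharper identity $e_1(J,M)=\lm(0:_Mx)-\lm(H^0_{\m}(M/xM))=-\lm(\coker g)\leq 0$; the cases $d\geq 3$ are then reduced to $d=2$ exactly as you do. Your route instead kills $H^0_{\m}(M)$ first, using that a finite-length submodule changes the Hilbert--Samuel function only by a bounded (eventually constant) amount and hence does not affect $e_1$ when $d\geq 2$ --- a fact the paper itself proves and uses in the next proposition --- so the superficial element becomes $M$-regular, Nagata's Theorem applies with no correction term in every dimension $d\geq 2$, and the induction closes uniformly at the $d=1$ base case (Proposition \ref{p1}). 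What each buys: your reduction is shorter and treats all $d\geq 2$ at once; the paper's $d=2$ computation gives more information (an exact expression for $e_1(J,M)$ as minus a length), which is in the spirit of its other results. Two small imprecisions in your write-up, neither fatal: the phrase ``passing to a minimal reduction does not change the Hilbert coefficients'' is false for general $\m$-primary ideals (that difference is exactly what $e_1(I)-e_1(J)$ measures) --- but it is also unnecessary here, since a parameter ideal is its own minimal reduction and all you need is to choose generators of $J$ forming a superficial sequence after enlarging the residue field, which is what the paper assumes; and the polynomials $P_J(M,X)$ and $P_J(M/H^0_{\m}(M),X)$ are not equal but differ by the constant $\lm(H^0_{\m}(M))$, which of course still leaves $e_1$ unchanged for $d\geq 2$.
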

\begin{proof}
Apply induction on $d$. The $d=1$ case is already proved. Suppose $d=2$.
Let $J=(x,y)$  where $x,y$ is a superficial sequence for $J$ with respect to $M$. 
 Consider the exact sequence
 $$0\longrightarrow M/(0:_Mx)\stackrel{x}\longrightarrow M\longrightarrow M/xM\longrightarrow 0.$$
Applying $H^0_{\m}(.)$  we get 
\begin{equation}\label{ses1}
0\longrightarrow H^0_{\m}(M/(0:_Mx))\stackrel{x}\longrightarrow H^0_{\m}(M)\stackrel{g}\longrightarrow H^0_{\m}(M/xM)\longrightarrow C\longrightarrow 0
\end{equation}
where $C=\coker g$.
Consider the exact sequence 
$$0\longrightarrow (0:_Mx)\longrightarrow M\longrightarrow M/(0:_Mx)\longrightarrow 0.$$
Applying $H^0_{\m}(.)$ on the exact sequence we get
$$0\longrightarrow H^0_{\m}(0:_Mx)\longrightarrow H^0_{\m}(M) \longrightarrow H^0_{\m}(M/(0:_Mx))\longrightarrow 0.$$
Since $H^0_{\m}(0:_Mx)=0:_Mx$, we have $$\lm(0:_Mx)=\lm(H^0_{\m}(M))-\lm(H^0_{\m}(M/(0:_Mx))).$$ 
Subtracting $\lm(H^0_{\m}(M/xM))$ from both sides of the above equation we get 

$$\lm(0:_Mx)-\lm(H^0_{\m}(M/xM))
=
\lm(H^0_{\m}(M))-\lm(H^0_{\m}(M/xM))\\
-\lm(H^0_{\m}(M/(0:_Mx))).
$$

\noindent
From the exact sequence (\ref{ses1}) we get $$\lm(H^0_{\m}(M/(0:_Mx)))-\lm(H^0_{\m}(M))+\lm(H^0_{\m}(M/xM)=\lm(C).$$ Therefore we have $\lm(0:_Mx)-\lm(H^0_{\m}(M/xM)=-\lm(C)$. By Theorem \ref{2}, we get 
$$e_1(\ov J,\ov M)=e_1(J,M)-\lm(0:_Mx).$$ 
By Proposition \ref{p1}, $e_1(\ov J,\ov M)=-\lm(H^0_{\m}(M/xM))$. Therefore $$e_1(J,M)=\lm(0:_Mx)-\lm(H^0_{\m}(M/xM))=-\lm(C)\leq 0.$$
Let $d\geq 3$ and $a\in J$ be a  superficial for $J$ with respect to $M$. Since  $e_1(J,M)=e_1(J/(a),M/aM),$ we are done by induction.
\end{proof}

\begin{proposition}
 Let $(R,\m)$ be a Noetherian local ring and $M$ be a finite $R$-module with $\dm M=d\geq 2$. Let $J$ be a parameter for $M$. If $M/H^0_{\m}(M)$ is Cohen-Macaulay then $e_1(J,M)=0$.
\end{proposition}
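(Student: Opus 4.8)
The plan is to pass to the Cohen--Macaulay module $\bar M := M/H^0_{\m}(M)$, whose Hilbert--Samuel function with respect to a parameter ideal has a closed form, and then to show that the discrepancy between the Hilbert--Samuel functions of $M$ and $\bar M$ stays bounded as $n\to\infty$, which is possible only if $e_1(J,M)=0$.

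Set $N := H^0_{\m}(M)$. Since $M$ is a finite module over the Noetherian local ring $R$, $N$ has finite length, so from $0\to N\to M\to \bar M\to 0$ we get $\dm\bar M=d$; by hypothesis $\bar M$ is Cohen--Macaulay, hence $\depth\bar M=d$. Write $J=(a_1,\dots,a_d)$, a system of parameters for $M$. As $\bar M$ is a quotient of $M$ with $\dm\bar M=d$, the $a_i$ form a system of parameters for $\bar M$ and hence, $\bar M$ being Cohen--Macaulay, an $\bar M$-regular sequence. Consequently the associated graded module $\bigoplus_{n\ge 0}J^n\bar M/J^{n+1}\bar M$ is a polynomial ring in $d$ variables over $\bar M/J\bar M$, so $\lm(\bar M/J^{n+1}\bar M)=\lm(\bar M/J\bar M)\binom{n+d}{d}=e_0(J,\bar M)\binom{n+d}{d}$ for all $n\ge 0$ (cf.\ \cite{bh}); in particular $e_i(J,\bar M)=0$ for all $i\ge 1$. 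Also $e_0(J,\bar M)=e_0(J,M)$, since $e_0$ is additive on short exact sequences and $\dm N=0<d$.

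Now tensor $0\to N\to M\to\bar M\to 0$ with $R/J^{n+1}$; the long exact $\Tor$ sequence yields an exact sequence
$$\Tor_1^R(\bar M,R/J^{n+1})\xrightarrow{\ \delta_n\ }N/J^{n+1}N\longrightarrow M/J^{n+1}M\longrightarrow \bar M/J^{n+1}\bar M\longrightarrow 0.$$
For $n\gg 0$ we have $J^{n+1}N=0$, so $N/J^{n+1}N=N$, and comparing lengths gives
$$\lm(M/J^{n+1}M)=\lm(N)-\lm(\im(\delta_n))+e_0(J,M)\binom{n+d}{d}\qquad(n\gg 0).$$
The left side equals $P_J(M,n)$ for $n\gg 0$, so the integer-valued function $n\mapsto \lm(N)-\lm(\im(\delta_n))$ agrees, for $n\gg 0$, with the polynomial $P_J(M,n)-e_0(J,M)\binom{n+d}{d}$. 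But $0\le \lm(\im(\delta_n))\le \lm(N)$, so this polynomial is bounded for all large $n$, hence constant; comparing it with $\sum_{i=1}^{d}(-1)^i e_i(J,M)\binom{n+d-i}{d-i}$ and using linear independence of the binomials forces $e_i(J,M)=0$ for $1\le i\le d-1$, and since $d\ge 2$ this includes $e_1(J,M)=0$.

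The one substantive ingredient is the closed form $\lm(\bar M/J^{n+1}\bar M)=e_0(J,\bar M)\binom{n+d}{d}$ for a parameter ideal on a Cohen--Macaulay module, i.e.\ the vanishing of the lower Hilbert coefficients of $\bar M$; everything else is formal manipulation of the $\Tor$ sequence together with the fact that a polynomial bounded on all large integers is constant. Alternatively, one may induct on $d$: choosing a superficial element $a\in J$ for $M$, Nagata's Theorem gives $e_1(J,M)=e_1(J/(a),M/aM)$ when $d\ge 3$, and one checks that $(M/aM)/H^0_{\m}(M/aM)\cong \bar M/a\bar M$ is again Cohen--Macaulay, reducing to the case $d=2$; that case follows from Nagata's Theorem and Proposition \ref{p1}, using $\lm(0:_M a)=\lm(N/aN)$, exactly as in the proof of the preceding theorem.
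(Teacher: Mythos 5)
Your proof is correct, and it reaches the conclusion by a mechanism that differs from the paper's in an instructive way, even though both start by passing to $\ov M=M/H^0_{\m}(M)$. The paper proves an \emph{exact} identity: since $W=H^0_{\m}(M)$ has finite length, $J^nM\cap W=0$ for $n\gg 0$ (the Artin--Rees argument already used in the $d=1$ proposition), whence $\lm(\ov M/J^n\ov M)=\lm(M/J^nM)-\lm(W)$ for $n\gg 0$, so $P_J(M,n)=P_{\ov J}(\ov M,n)+\lm(W)$ and, because $d\geq 2$, the constant shift does not touch $e_1$; thus $e_1(J,M)=e_1(\ov J,\ov M)=0$. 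You instead tensor $0\to N\to M\to \ov M\to 0$ with $R/J^{n+1}$ and settle for a \emph{bounded} discrepancy $0\le \lm(N)-\lm(\im\,\delta_n)\le\lm(N)$, which you convert into the vanishing of $e_1,\dots,e_{d-1}$ by invoking the closed form $\lm(\ov M/J^{n+1}\ov M)=e_0(J,\ov M)\binom{n+d}{d}$ for the Cohen--Macaulay quotient together with the fact that a polynomial bounded on large integers is constant. What each buys: your route avoids the $J^nM\cap W=0$ vanishing entirely (boundedness of the $\Tor$ correction suffices) and yields $e_i(J,M)=0$ for all $1\le i\le d-1$ in one stroke; the paper's route is shorter, needs no $\Tor$ or CM Hilbert-function formula, and, being an exact comparison of polynomials, also pins down the last coefficient ($e_d(J,M)=(-1)^d\lm(W)$), which is exactly what the subsequent example exploits when computing $e_2(I)=\lm(H^0_{\m}(R))$. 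Your alternative sketch (induction via a superficial element, Nagata's theorem, and the $d=1$ proposition) is also sound and mirrors the inductive pattern used elsewhere in the section, provided one records, as you do, that $(M/aM)/H^0_{\m}(M/aM)\cong \ov M/a\ov M$ because $H^0_{\m}(\ov M/a\ov M)=0$ when $d\ge 2$.
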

\begin{proof}
 Let $W=H^0_{\m}(M)$ and $\ov M=M/W$. 
Since $\lm(W)<\infty$, for $n>>0$, $J^nM\cap W=0$.
We have for large $n$,
\begin{eqnarray*}
 H_{\ov J}(\ov M,n)&=&\lm(\ov M/J^n\ov M)\\&=&\lm(M/J^nM+W)\\
&=& \lm(M/J^nM)-\lm(J^nM+W/J^nM)\\
&=& \lm(M/J^nM)-\lm(W/J^nM\cap W)\\
&=& H_{J}(M,n)-\lm(W).
\end{eqnarray*}
Therefore $$P_{\ov J}(\ov M,n)=P_J(M,n)-\lm(W).$$
Hence $e_1(J,M)=e_1(\ov J, \ov M)$. Since $\ov M$ is Cohen-Macaulay, $e_1(\ov J,\ov M)=0$. Thus $e_1(J,M)=0$.
\end{proof}

\begin{example}
 Let $S=k[|X,Y,Z|]$ be a power series ring over a field $k$ and $J=(XZ,YZ,Z^2)$. 
Put  $R=S/J=k[[x,y,z]]$. Then $\dm R=2$ and $\depth R=0$. Consider the parameter ideal $I=(x,y).$  We calculate the Hilbert coefficients  of $I.$ 
Let $`-'$ denote the image in $\ov R=R/ H^0_{\m }(R)$ where $\m$ is the maximal ideal of $R.$ Notice that  for large $n,$
$$H^0_{\m}(R) = \dfrac{J: (X,Y,Z)^n}{J}= \dfrac{(J:X^n)\cap (J:X^{n-1}Y)\cap \ldots \cap (J:Y^n)}{J}
 = \dfrac{(Z)}{J}.$$
Therefore $R/H^0_{\m}(R)=\frac{k[|X,Y,Z|]}{J}/\frac{(Z)}{J}=k[|X,Y|]$ which is Cohen-Macaulay. Thus $e_1(x,y)=e_1(\ov x, \ov y)=0$. Notice that $e_2(\ov I)=e_2(I)-\lm(H^0_{\m}(R))$. Since $e_2(\ov I)=0$, $e_2(I)=\lm(H^0_{\m}(R))=1$.

\end{example}

\begin{example}
Let $S={\mathbb Q}[[x,y,z,u]]$, be the power series ring over $\mathbb Q$. 
Let $\phi:{\Q} [[x,y,z,u]]\longrightarrow {\Q}[|x,t|]$ defined by 
$$\phi(x)=x,\phi(y)=t^2 ,\phi(z)=t^5 \mbox{ and }  \phi(u)=t^7.$$ Then $$I_1 :=\ker \phi =(y^6-uz,z^3-y^4u,u-yz)$$ is a height 2 prime ideal. Let $\chi :\mathbb Q [[x,y,z,u]]\longrightarrow \mathbb Q [|u,t|]$ be defined by $$\chi(x)=t^2 ,\chi(y)=t^3 ,\chi(z)=t^4  \mbox{ and }\chi(u)=u.$$ Then $$I_2 :=\ker \chi=(y^2-xz,x^2-z)$$ is also a height 2 prime ideal. Put $I=I_1 \cap I_2$ and $R=S/I.$
Then $\dim R=2$ and $R$ is not Cohen-Macaulay.  The ideal  $J=(x,u)R$ is  a parameter ideal in $R$  and $e_1(J)=-3$. 
 This example has been calculated using Cocoa. We thank M. Rossi for sending this CoCoA procedure to find Hilbert polynomial. The code is given below.\\ \\
 \texttt{Alias P:=\$contrib/primary;\\}
\texttt{Use S ::=} $\Q [x,y,z,u]$;\\
\texttt{I1  := Ideal}$(y^6-uz,z^3-y^4u,u-yz);$\\
\texttt{I2 :=Ideal}$(y^2-xz,x^2-z);$\\
\texttt{I := Intersection(I1, I2);\\
I;\\
Ideal}$(y^3z - xyz^2 - y^2u + xzu, x^2yz - yz^2 - x^2u + zu, x^2y^3u^2 - x^2z^4 - xyz^2u^2 + z^5 - y^2u^3 + xzu^3, y^5u^2 - y^2z^4 + xz^5 - yz^3u^2 - xy^2u^3 + z^2u^3, y^6u - y^2z^3u - xy^3u^2 - y^2z^3 + xz^4 + yz^2u^2, x^2y^4u - xy^2z^2u - x^2z^3 - y^3u^2 + xyzu^2 + z^4, x^2z^5 - x^2y^2u^3 - z^6 + y^2zu^3, y^7 - xyz^4 - xy^4u + xz^3u - y^2z^2 + xz^3, x^2y^6 - y^2z^4 - y^5u + yz^3u - x^2zu + z^2u, y^2z^5 - xz^6 - y^4u^3 + xy^2zu^3)$\\
\texttt{Q  := Ideal}$(x,u)+I;$\\
\texttt{Dim}$(S/Q);$\\
$0$\\
$J :=I1+I2;$\\
\texttt{Dim}$(S/J);$\\
$0$\\
\texttt{PS := P.PrimaryPoincare(I, Q); PS;}\\
$(12 - 3x) / (1-x)^2$\\
\texttt{Hilbert}$(S/J);$\\
$H(0) = 1, H(1) = 4, H(2) = 7, H(3) = 5, H(t) = 0$   for $t \geq  4.$
 
\end{example}

Recently the Negativity Conjecture has been settled in 
\cite{gghopv} for unmixed local rings. We generalize this to
finite unmixed modules over Cohen-Macaulay local rings.

\begin{definition}
 Let $(R,\m)$ be a Noetherian local ring of dimension $d.$
A finite $R-$module $M$ is called unmixed if for each associated prime $P$ of its $\m$-adic completion $\hat M,$ $\dim R/P=d.$ 
\end{definition}
  
We use Nagata's technique of idealization \cite{nag}. Let $M$ be an $R-$module. Let $R^*=R\oplus M$ be the direct sum of the $R-$modules $R$ and $M.$ Define multiplication in $R^*$ by
$$(r,m)((s,n)=(rs,rn+ms) \; \text{for all } r,s \in R; m,n \in M .$$  

In the next lemma we prove that the associated primes of the idealization $R^*$ come from those of $R$ and $M.$
\begin{lemma}\label{lema}
 Let $(R,\m)$ be a local ring and $M$ be a finite $R$-module. Let $A=R*M$ be the idealization of $M$ over $R$. Then
 $$\Ass A\subseteq \{P*M\mid P\in \Ass R \cup \Ass_R M\}.$$
 Moreover if $P\in \Ass_R M$ then $P*M\in \Ass A$.
\end{lemma}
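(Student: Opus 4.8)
The plan is to analyze the structure of associated primes of the idealization $A = R*M$ by relating submodules of $A$ (as an $A$-module, equivalently as an $R$-module) to submodules of $R$ and $M$. The key observation is that the ideals of $A$ of the form $P*M$ with $P$ an ideal of $R$ are precisely the $A$-submodules of $A$ that contain the copy $0*M$ of $M$; and since $0*M$ is itself an ideal of $A$ with $(0*M)^2 = 0$, every associated prime $\mathfrak{P}$ of $A$ either contains $0*M$ or meets it. First I would show that any prime ideal $\mathfrak{P}$ of $A$ necessarily contains $0*M$ (because if $(0,m) \in \mathfrak{P}$ fails for some $m$, then $(0,m)(0,n) = 0 \in \mathfrak{P}$ for all $n$ forces $(0,n) \in \mathfrak{P}$, a contradiction unless $M=0$; more directly, $(0*M)^2 = 0 \subseteq \mathfrak{P}$ and $\mathfrak{P}$ prime give $0*M \subseteq \mathfrak{P}$). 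Hence every prime of $A$, in particular every associated prime, has the form $P*M$ where $P = \{r \in R : (r,0) \in \mathfrak{P}\}$ is a prime ideal of $R$, and $A/(P*M) \cong R/P$.

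Next I would pin down which $P$ arise. Suppose $\mathfrak{P} = P*M \in \Ass A$, say $\mathfrak{P} = \ann_A(r,m)$ for some $(r,m) \in A$. Unwinding the multiplication rule, $(s,n)(r,m) = (sr, sm + nr) = (0,0)$ means $sr = 0$ in $R$ and $sm = -nr$ in $M$. Taking $n = 0$ shows $s \in \ann_R(m)$ whenever $s \in P$ with $sr = 0$; and the full condition couples $\ann_R(r)$ (in $R$) with $\ann_R(m)$ (in $M$) through the relation $sm \in rM$. A short case analysis on whether $r = 0$ or $r \neq 0$ (and, when $r \neq 0$, using that $sr = 0$ forces $s \in \ann_R(r)$) should yield that $P \in \Ass_R R \cup \Ass_R M$: roughly, if $r \neq 0$ one extracts an element of $\Ass R$ from $\ann_R(r)$ after adjusting by the $M$-part, and if $r = 0$ then $(0,m)$ has annihilator $P*M$ with $P = \ann_R(m) \cup (\text{stuff killing } m \text{ via shifting})$, which lands in $\Ass_R M$. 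This establishes the inclusion $\Ass A \subseteq \{P*M \mid P \in \Ass R \cup \Ass_R M\}$.

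For the "moreover" statement, suppose $P \in \Ass_R M$, so $P = \ann_R(m)$ for some $0 \neq m \in M$. Then I claim $P*M = \ann_A(0,m)$. Indeed $(s,n)(0,m) = (0, sm)$, which is zero precisely when $sm = 0$, i.e. $s \in P$, with $n$ arbitrary; so $\ann_A(0,m) = P*M$ exactly, giving $P*M \in \Ass A$.

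The main obstacle I anticipate is the middle step: showing that the prime $P$ attached to a given associated prime of $A$ actually lies in $\Ass R \cup \Ass_R M$ rather than just being an arbitrary prime. The coupling between the $R$-component and the $M$-component in the annihilator condition $sr = 0,\ sm = -nr$ is what makes this delicate — one must argue carefully that the extra freedom in choosing $n$ does not let $P$ escape both $\Ass R$ and $\Ass_R M$. I expect the cleanest route is to localize at $P$: replace $R$ by $R_P$ and $M$ by $M_P$, observe that idealization commutes with localization, and reduce to showing that if $P A_P$ is associated to $A_P$ then $P R_P$ is associated to $R_P$ or to $M_P$, which is the statement that $\depth$-type / associated-prime information for $A$ at $P$ is controlled by that of $R$ and $M$ at $P$ — and this follows from the $R$-module decomposition $A = R \oplus M$ together with $\Ass_R(R \oplus M) = \Ass_R R \cup \Ass_R M$, once one checks that $A$-associated primes restrict to $R$-associated primes of $A$ viewed as an $R$-module via $r \mapsto (r,0)$.
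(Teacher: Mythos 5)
Your opening step (every prime of $A$ contains $0*M$ because $(0*M)^2=0$, hence every prime has the form $P*M$ with $A/(P*M)\cong R/P$) and your proof of the ``moreover'' part (for $P=\ann_R(m)$ one has $\ann_A((0,m))=P*M$, so $P*M\in\Ass A$) are correct and coincide with the paper's. The gap is precisely the step you yourself flag as the main obstacle: the inclusion $\Ass A\subseteq\{P*M\mid P\in\Ass R\cup\Ass_R M\}$ is never actually proved. The ``short case analysis on whether $r=0$ or $r\neq 0$'' is not carried out, and as described (e.g.\ ``$P=\ann_R(m)\cup(\text{stuff killing }m\text{ via shifting})$'') it is not an argument. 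Moreover the ``coupling'' you worry about is a non-issue: if $P*M=\ann_A((r,m))$, you only ever need to test elements of the form $(a,0)$. Since $(a,0)\in P*M$ if and only if $a\in P$, and $(a,0)(r,m)=(ar,am)$, one gets immediately $P=(0:_R r)\cap(0:_R m)$; a prime ideal which is the intersection of two ideals containing it must equal one of them, so $P=(0:_R r)\in\Ass R$ or $P=(0:_R m)\in\Ass_R M$. This is the paper's proof, and it is two lines. (Incidentally, since $0*M\subseteq\ann_A((r,m))$, one even has $rM=0$, so the coupled condition $sm=-nr$ degenerates anyway.)

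Your fallback suggestion at the end would also work, and more simply than you indicate: no localization is needed. If $P*M=\ann_A((r,m))\in\Ass A$, contract along $R\to A$, $a\mapsto(a,0)$: the contraction of $P*M$ is $P$, and the contraction of $\ann_A((r,m))$ is $\ann_R((r,m))$, the annihilator of $(r,m)$ in the $R$-module $A=R\oplus M$. Hence $P=\ann_R((r,m))$ is a prime annihilator, so $P\in\Ass_R(R\oplus M)=\Ass_R R\cup\Ass_R M$. Written out, this is a complete proof by a (slightly) different route from the paper's primality-of-an-intersection argument. As submitted, however, the crucial inclusion rests either on an unexecuted case analysis or on a verification you defer (``once one checks that\dots''), so the proposal is incomplete at its central point.
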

\begin{proof}
 Let $\mathcal P\in \Spec A$ then $\mathcal P\supseteq 0*M$ as $(0*M)^2=0$. Hence $\mathcal P/(0*M)\in \Spec (A/0*M)=\Spec R$. Therefore there exists a prime $P\in R$ such that ${\mathcal P}/{0*M}={P*M}/{0*M}$ which implies $\mathcal P=P*M.$ Thus every prime ideal of $A$ is of the form $P*M$ where $P$ is a prime ideal of $R$.
 
 Let $P*M\in \Ass A$ then $P=(0:(r,m))$, where $r\in R$ and $m\in M$. Let $a\in P$ then $(a,0)\in P*M$ which implies that $(ar,am)=(0,0)$. Thus $a\in (0:r)\cap  (0:m)$. Hence $P\subseteq (0:r)\cap (0:m)$. Let $b\in (0:r)\cap (0:m)$ then $(b,0)(r,m)=(0,0)$ which implies $(b,0)\in P*M$. Thus $b\in P$. Hence $P=(0:r)\cap (0:m)$. Therefore either $P=(0:r)$ or $P=(0:m)$. Hence $P\in \Ass R\cup \Ass_R M$. Therefore $\Ass A\subseteq \{P*M\mid P\in \Ass R \cup \Ass_R M\}.$\\
 Let $P\in \Ass_RM$ then $P=(0:m)$ where $m\in M$. Want to show that $P*M=(0:(0,m))$. Let $(a,n)\in P*M$. Since $(a,n)(0,m)=(0,am)=(0,0)$ therefore $(a,n)\in (0:(0,m))$. Conversely if  $(b,m')\in (0:(0,m))$ then  $b\in(0:m)$. Thus $P*M=(0:(0,m))$ and hence $P*M\in \Ass A$.
\end{proof}

\begin{theorem}
 Let $(R,\m)$ be a Cohen-Macaulay local ring and let $M$ be  an unmixed module with $\dm R=\dm M=d.$  If $e_1(J,M)=0$ for some parameter ideal $J$ for $M$. Then $M$ is is a Cohen-Macaulay $R-$module.
\end{theorem}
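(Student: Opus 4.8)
The plan is to pass to the Nagata idealization $A = R * M$ and reduce the assertion to the (now settled) Negativity Conjecture applied to the local ring $A$. First dispose of the case $\dm M = 0$: then $M$ has finite length and is automatically \CM{}, so assume $d \geq 1$. Form $A = R * M$, with maximal ideal $\M = \m * M$. Recall the standard facts about idealization: as an $R$-module $A = R \oplus M$, so $\dm A = \dm_R A = \max(\dm R, \dm M) = d$ and $\depth_A A = \depth_R A = \min(\depth R, \depth M)$; moreover completion commutes with idealization, $\hat A = \hat R * \hat M$. Since $R$ is \CM{}, $\depth R = d$, and since $\dm M = d$, the depth formula shows that $A$ is \CM{} if and only if $\depth M = d$, i.e.\ if and only if $M$ is \CM{}. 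Hence it suffices to prove that $A$ is \CM{}.

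Next I would check that $A$ is an unmixed local ring. Applying Lemma~\ref{lema} to the pair $(\hat R, \hat M)$ gives
$$\Ass \hat A \ \sbs\ \{\, P * \hat M \mid P \in \Ass \hat R \cup \Ass_{\hat R}\hat M \,\}.$$
Since $0 * \hat M$ is nilpotent in $\hat A$ and $\hat A/(0 * \hat M) \cong \hat R$, we get $\hat A/(P * \hat M) \cong \hat R/P$, so $\dm \hat A/(P * \hat M) = \dm \hat R/P$ for every prime $P$ of $\hat R$. Now $\hat R$ is \CM{}, so its associated primes are exactly its minimal primes, each with $\dm \hat R/P = d$; and $\hat M$ is unmixed by hypothesis, so each $P \in \Ass_{\hat R}\hat M$ also satisfies $\dm \hat R/P = d$. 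Therefore $\dm \hat A/Q = d$ for every $Q \in \Ass \hat A$, i.e.\ $A$ is unmixed.

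Finally, I would exhibit a parameter ideal of $A$ with Chern number zero. Assume first that $J$ is $\m$-primary (so that $JA$ is genuinely a parameter ideal of $A$). Then $JA = J \oplus JM$, and a direct computation gives $(JA)^n = J^n \oplus J^n M$ for all $n$, so that
$$\lm\big(A/(JA)^{n+1}\big) = \lm(R/J^{n+1}) + \lm(M/J^{n+1}M) \qquad (n \gg 0),$$
whence $e_i(JA, A) = e_i(J, R) + e_i(J, M)$ for every $i$. Since $R$ is \CM{} and $J$ is a parameter ideal, $e_1(J, R) = 0$, and $e_1(J, M) = 0$ by hypothesis; hence $e_1(JA, A) = 0$. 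Because $A$ is unmixed of positive dimension, the Negativity Conjecture, settled in \cite{gghopv}, forces $A$ to be \CM{}, for otherwise $e_1(JA, A) < 0$. By the first step, $M$ is \CM{}.

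The crux is the second step: it is exactly there that the two hypotheses — $R$ Cohen--Macaulay and $M$ unmixed — are combined, through Lemma~\ref{lema} and the passage to the completion, to transfer unmixedness to the idealization $A$; after that, the Hilbert-function bookkeeping and the appeal to \cite{gghopv} are routine. A secondary point requiring care is the reduction ensuring one may take $J$ primary to $\m$ (so that $JA$ is a parameter ideal of $A$): this is automatic when $M$ is faithful, since then a system of parameters for $M$ generates an $\m$-primary ideal of $R$, and the general case should be handled by reducing to $R/\ann_R M$.
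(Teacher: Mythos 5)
Your proposal is correct and follows essentially the same route as the paper: idealization $A=R*M$, unmixedness of $A$ via Lemma~\ref{lema} applied to $\hat A=\hat R*\hat M$, additivity $e_1(JA,A)=e_1(J)+e_1(J,M)=0$, the appeal to \cite{gghopv}, and a depth argument to pass from $A$ back to $M$. The differences are cosmetic (you compute $(JA)^n=J^n\oplus J^nM$ directly where the paper tensors the split sequence $0\to M\to A\to R\to 0$, and you use $\depth_R A=\min(\depth R,\depth M)$ where the paper invokes the depth lemma on that sequence), plus your explicit care that $J$ be $\m$-primary so that $JA$ is a parameter ideal of $A$ -- a point the paper leaves implicit.
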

\begin{proof}
 Let $A=R*M$ be the idealization of $M$ over $R$. Then $\dm A=\dm R$. 
Note that $\widehat A=\widehat{R*M}=\widehat R *\widehat M$. If $P*\widehat M\in \Ass \widehat A$, then $P\in \Ass\widehat R\cup \Ass \widehat M$ by Lemma \ref{lema}. Since $R$ is Cohen-Macaulay and $M$ is unmixed $\dm \widehat R/Q=d$ for all $Q\in \Ass \widehat R\cup \Ass \widehat M.$ Therefore $\dm \widehat A/(P*\widehat M)=\dm \widehat R/P=d$. Hence $A$ is unmixed. Consider the  exact sequence of $R$-modules
 \begin{equation}\label{eq4}
 0\longrightarrow M\longrightarrow A\longrightarrow R\longrightarrow 0.
 \end{equation}
 Tensoring the above sequence with $R/J^n$ we get the following exact sequence
 $$0\longrightarrow M/J^nM\longrightarrow A/J^nA \longrightarrow R/J^n\longrightarrow 0.$$
Since the length function is additive, we get 
$$\lm(A/J^nA)=\lm(M/J^nM)+\lm(R/J^n).$$
Hence $P_J(A,n)=P_J(M,n)+P_J(R,n)$. Equating the coefficients of the Hilbert polynomials we get
$$e_1(J,A)=e_1(J,M)+e_1(J).$$
Since $R$ is Cohen-Macaulay $e_1(J)=0$. Thus $e_1(J,A)=0$. Hence by  \cite[Theorem 2.1]{gghopv} $A$ in Cohen-Macaulay ring. Applying depth lemma on the exact sequence (\ref{eq4}) we get that 
$$\depth M\geq \min \{\depth A,\depth R +1\}$$
which implies $\depth M=d$. Thus $M$ is Cohen-Macaulay.
\end{proof}

\section{Some Bounds for the Chern Number}

In this section we find an upper bound for the Chern number of an admissible filtration $\mathcal F.$ This bound yields
the Huckaba-Marley bound in Cohen-Macaulay case. We use Rees algebra of $\mathcal F$ and Serre's multiplicity formula in terms
of lengths of Koszul homology modules.

Let $A=\oplus_{n\geq 0}A_n$ be a standard graded algebra with $A_0=(R,\m)$ be a local ring. Let $M=\oplus_{n\geq 0}M_n$ be a finitely generated graded $A-$module of dimension $d$ such that $\lm(M_n)<\infty$ for all $n\geq 0$. Let $P_M(x)$ be the polynomial corresponding to the function $H_M(n)= \lm(M_n).$ Write
$$
P_M(x)=\sum_{i=0}^{d-1} (-1)^i e_i(M)\binom{x+d-i}{d-i}.
$$

\begin{lemma}\label{l1}
 Let $A=\oplus_{n\geq 0}A_n$ be a standard graded algebra with $A_0=(R,\m)$ be a local ring and let $M=\oplus_{n\geq 0}M_n$ be a finitely generated graded $A-$module of dimension $d$ such that $\lm(M_n)<\infty$ for all $n\geq 0$.  Then $$e_0(A_1,M)=e_0(M).$$
\end{lemma}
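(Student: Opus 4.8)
The plan is to compute the Hilbert--Samuel function $n\mapsto\lm(M/A_+^{\,n+1}M)$ of the irrelevant ideal $A_+=A_1A=\bigoplus_{m\geq 1}A_m$ of $A$ directly, and to compare it with the summatory function $n\mapsto\sum_{\ell\leq n}\lm(M_\ell)$ of the Hilbert function of $M$. These two functions will turn out to agree up to an error of strictly smaller degree, which forces their leading coefficients to coincide; reading off the leading coefficients then gives $e_0(A_1,M)=e_0(A_+,M)=e_0(M)$.

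First I would record the shape of the powers of $A_+$. Since $A$ is standard graded, $A_+^{\,n}=\bigoplus_{m\geq n}A_m$, so $(A_+^{\,n+1}M)_\ell=\sum_{m\geq n+1}A_mM_{\ell-m}$. Two observations follow: $(A_+^{\,n+1}M)_\ell=0$ whenever $\ell\leq n$; and, if $s$ denotes the largest degree appearing in a finite homogeneous generating set of $M$, then $(A_+^{\,n+1}M)_\ell=M_\ell$ whenever $\ell\geq n+1+s$ (each generator of $M_\ell$ has the form $a\cdot g$ with $g$ a homogeneous generator of some degree $d'\leq s$ and $a\in A_{\ell-d'}$, and $\ell-d'\geq n+1$ puts $a$ in $A_+^{\,n+1}$). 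Hence $M/A_+^{\,n+1}M$ is concentrated in degrees $0\leq\ell\leq n+s$, it equals $M_\ell$ in degrees $\ell\leq n$, and it is a quotient of $M_\ell$ for $n<\ell\leq n+s$. As each $M_\ell$ has finite length by hypothesis, all the lengths in sight are finite (no reduction to an Artinian base ring is needed), and summing over $\ell$ gives, for every $n\geq 0$,
$$\lm\!\left(M/A_+^{\,n+1}M\right)=\sum_{\ell=0}^{n}\lm(M_\ell)+\varepsilon(n),\qquad 0\leq\varepsilon(n)\leq\sum_{\ell=n+1}^{n+s}\lm(M_\ell).$$
Now fix $n\gg 0$, so that $\lm(M_\ell)=P_M(\ell)$ for all $\ell\geq n$; since $\dim M=d$, the polynomial $P_M$ has degree $d-1$, with leading term $\tfrac{e_0(M)}{(d-1)!}\,x^{d-1}$. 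Then $\varepsilon(n)\leq s\cdot\max_{n<\ell\leq n+s}P_M(\ell)=O(n^{d-1})$, while $\sum_{\ell=0}^{n}\lm(M_\ell)$ agrees, for $n\gg 0$, with a polynomial $Q(n)$ of degree $d$ whose leading coefficient is $\tfrac1d\cdot\tfrac{e_0(M)}{(d-1)!}=\tfrac{e_0(M)}{d!}$ (the finitely many non-polynomial values of $\lm(M_\ell)$ only affect lower-order terms of $Q$). Therefore $n\mapsto\lm(M/A_+^{\,n+1}M)$ agrees for $n\gg0$ with $Q(n)+O(n^{d-1})$, hence is itself a polynomial of degree $d$ with leading coefficient $e_0(M)/d!$. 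Since $e_0(A_1,M)=e_0(A_+,M)$ is, by definition of the Hilbert coefficients, $d!$ times the leading coefficient of this Hilbert--Samuel polynomial, we conclude $e_0(A_1,M)=e_0(M)$.

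The step that needs the most care is the passage from ``agrees with $Q(n)+O(n^{d-1})$'' to ``is a polynomial of degree exactly $d$'': eventual polynomiality of $n\mapsto\lm(M/A_+^{\,n+1}M)$ is the standard Hilbert--Samuel theorem applied to the ideal $A_+$ (legitimate since $\lm(M/A_+M)<\infty$), and the degree is then necessarily $d$ because $Q$ has degree $d$ and $e_0(M)\neq 0$ (as $M\neq 0$ has dimension $d$). It is worth noting explicitly at the outset that the hypothesis $\lm(M_n)<\infty$ for all $n$ is precisely what makes every length appearing above finite, even though $A_0=R$ need not have dimension zero.
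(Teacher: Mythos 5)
Your proof is correct and follows essentially the same route as the paper: both decompose $M/A_1^{n}M$ (equivalently $M/A_+^{n}M$) degree by degree using the largest generator degree, so that the length equals the partial sum $\sum_{\ell\le n}\lm(M_\ell)$ plus an error supported in a window of bounded width, which is of lower order and hence does not affect the leading coefficient. Your explicit justification of the eventual polynomiality via the Hilbert--Samuel theorem for $A_+$ is a slightly more careful rendering of the step the paper passes over quickly, but it is not a different method.
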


\begin{proof}
 Let $n_0$ be the largest degree of a homogeneous set of generators of $M$ as an $A$-module. Then
 $$M_{n_0+1}=A_{n_0+1}M_0+A_{n_0}M_1+\cdots +A_1M_{n_0}.$$
 Since $A$ is standard graded $A_r=(A_1)^r$ for all $r\geq 1$. Therefore we have
 $$M_{n_0+1}=(A_1)^{n_0+1}M_0+(A_1)^{n_0}M_1+\ldots +(A_1)M_{n_0}=A_1M_{n_0}.$$
 Hence for all $k\geq 1$, $M_{n_0+k}=(A_1)^kM_{n_0}$. Let $H(n)=\lm(M_n)$. Since
 $$\frac{M}{(A_1)^nM}=\dfrac{\oplus_{r\geq 0}M_r}{\oplus_{r\geq 0}A_1^nM_r}=M_0\oplus \cdots \oplus M_{n-1}\oplus\frac{M_n}{A_1^nM_0}\oplus \cdots \oplus \frac{M_{n+n_0}}{A_1^nM_{n_0}},$$
 we get $$\lm(M/(A_1)^nM)=\sum_{i=0}^{n-1}H(i)+\sum_{j=0}^{n_0}\lm\left( \frac{M_{n+j}}{A_1^nM_j}\right).$$
 Since the 2nd sum is a finite sum for large $n$ it is a polynomial function of degree at most $d-1$. Hence $\lm(M/A_1^nM)$ is a polynomial function of degree $d$ since $\sum_{i=0}^{n-1}H(i)$ is a polynomial function of degree $d$. Thus $e_0(A_1,M)=e_0(M)$.
\end{proof}

\begin{theorem}\label{thm2}
 Let $(R,\m)$ be a $d$-dimensional local ring and let $J$ be a parameter ideal of $R$. Let $\mathcal F=\{{J_n}\}$ be a $J-$admissible filtration. Let $A=R[Jt]=\oplus_{n\geq 0}J^nt^n$ and $B=\mathcal R(\mathcal F)=\oplus_{n\geq 0}{J_n}t^n$ and $M=B/A=\oplus_{n\geq 1}{J_n}/J^n$. 
If $\h (A:_A B)=1$ then 
 $$ e_1(\mathcal F)\leq  e_1(J)+\sum_{n\geq 1}\lm({J_n}/J{J_{n-1}}).$$
\end{theorem}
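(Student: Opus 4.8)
The plan is to express the Chern number $e_1(\mathcal F)$ of the filtration in terms of the Chern number $e_1(J)$ of the parameter ideal plus a correction term coming from the module $M = B/A$, and then to bound that correction term. The starting point is the additivity of Hilbert polynomials along the exact sequence of graded $A$-modules $0 \to A \to B \to M \to 0$. Taking $n$-th graded pieces gives, for $n \gg 0$, the length identity $\lm(J^n) = \lm(J_n) - \lm(J_n/J^n)$, hence at the level of the associated polynomials (writing $\lm(R/J^{n+1}) - \lm(R/J_{n+1})$ for the difference), we get that $e_1(\mathcal F) - e_1(J)$ is governed by the Hilbert polynomial $P_M$ of $M$. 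More precisely, $\dim M \leq d-1$ since $\h(A:_A B) = 1$ means $A$ and $B$ agree after localizing away from a codimension-one locus, so $M$ is a $(d-1)$-dimensional (or lower) graded $B$-module, and the difference of Chern numbers is exactly $e_0(M)$ (the multiplicity of $M$ as a graded module), up to the appropriate sign bookkeeping dictated by the conventions fixed just before the statement.

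Next I would compute $e_0(M)$ using Serre's multiplicity formula. Since $\h(A :_A B) = 1$, the module $M = B/A$ is annihilated by an ideal of height one in $B$, so $\dim M = d-1$ and its support is a finite set of codimension-one primes of $B$. The multiplicity $e_0(M)$ with respect to a parameter (the element $t$, or equivalently a sufficiently general linear form coming from $A_1 = Jt$) can be written as an alternating sum of lengths of Koszul homology modules $H_i(\mathbf{x}; M)$ for a system of parameters $\mathbf{x}$ of length $d-1$ on $M$; the key point is that the Euler characteristic $\chi(\mathbf{x}; M) = e_0(\mathbf{x}; M) = e_0(M)$ by Lemma \ref{l1} applied to $M$, and that $\chi$ is bounded above by $\chi_1$-type partial Euler characteristics, in particular by $\lm(M/\mathbf{x}M) = \lm(H_0(\mathbf{x};M))$ only when higher homology vanishes — so instead one should bound $e_0(M)$ by the length of $M/(A_1)M = \bigoplus_{n \geq 1} J_n/(J J_{n-1})$, which is precisely $\sum_{n \geq 1} \lm(J_n/J J_{n-1})$. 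The inequality $e_0(M) \leq \lm(M/A_1 M)$ follows because for a finitely generated graded module of dimension $d-1$ over the standard graded algebra $A$, the multiplicity is at most $\lm(M/A_1M)$ when $A_1$ contains a parameter for $M$; this is the graded analogue of "multiplicity $\leq$ colength of a reduction," and is where Lemma \ref{l1} does its work by identifying $e_0(A_1, M)$ with $e_0(M)$.

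Assembling the pieces: $e_1(\mathcal F) - e_1(J) = \pm e_0(M) \leq \sum_{n \geq 1} \lm(J_n/J J_{n-1})$, giving the claimed bound $e_1(\mathcal F) \leq e_1(J) + \sum_{n \geq 1} \lm(J_n/J J_{n-1})$. I expect the main obstacle to be the sign and degree-shift bookkeeping in the first step: one must carefully match the two indexing conventions in play (the filtration's polynomial $P_{\mathcal F}$ is attached to $n \mapsto \lm(R/J_{n+1})$, whereas $P_M$ is attached to $n \mapsto \lm(M_n) = \lm(J_n/J^n)$), verify that the degree-$d$ and degree-$(d-1)$ coefficients of $P_J$ and $P_{\mathcal F}$ agree (so that comparing $e_1$'s is legitimate and the inequality is not spoiled by a lower-dimensional term shifting the wrong way), and confirm that $\dim M = d-1$ exactly (not less), since the hypothesis $\h(A :_A B) = 1$ is what guarantees $M \neq 0$ has a well-defined multiplicity $e_0(M) \geq 0$ contributing with the correct sign. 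The second potential subtlety is justifying $e_0(M) \leq \lm(M/A_1M)$ in the non-Cohen-Macaulay setting — here one invokes Lemma \ref{l1} together with the standard fact that for any system of parameters of a graded module the colength dominates the multiplicity.
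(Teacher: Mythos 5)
Your overall strategy is the paper's: use the exact sequence $0\to A\to B\to M\to 0$ to identify $e_1(\mathcal F)-e_1(J)$ with the leading coefficient of $n\mapsto\lm(M_n)$ (legitimate since admissibility gives $e_0(\mathcal F)=e_0(J)$), transfer this to the multiplicity $e_0(A_1,M)$ via Lemma \ref{l1}, and then bound that multiplicity by the colength $\lm(M/A_1M)=\sum_{n\geq 1}\lm(J_n/JJ_{n-1})$ using Serre's Koszul--homology formula. However, your dimension bookkeeping has a genuine error that, taken literally, derails the Serre step. Since $\dim A=\dim B=d+1$, the hypothesis $\h(A:_AB)=1$ gives $\dim M=d$, not $d-1$ (the paper first reduces to $R$ complete to draw this conclusion). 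This is load-bearing: Serre's theorem computes the multiplicity as the Euler characteristic of the Koszul complex on a system of parameters whose length equals $\dim M$, and the system one must use is the $d$ elements $x_1t,\ldots,x_dt$ generating $A_1=Jt$, which form a system of parameters for $M$ precisely because $\dim M=d$ and $\lm(M/JtM)<\infty$ (the latter since $J_n=JJ_{n-1}$ for $n\gg 0$ by admissibility). Only with this choice is $H_0=M/JtM=\bigoplus_{n\geq 1}J_n/JJ_{n-1}$, so that the colength is exactly the sum in the statement. With your choices --- a system of parameters of length $d-1$, or ``the element $t$'' (which does not even lie in $B$ unless $J_1=R$), or a general linear form --- the colength is not $\sum_{n\geq 1}\lm(J_n/JJ_{n-1})$; and if $\dim M$ really were $d-1$, the Euler characteristic of the Koszul complex on the $d$ generators of $A_1$ would vanish and the ``multiplicity $\leq$ colength of a system of parameters'' fact you cite would not apply to $A_1$ in the form you state it, while Lemma \ref{l1}, whose conclusion $e_0(A_1,M)=e_0(M)$ is formulated for a $d$-dimensional module, would no longer mesh with your normalization.

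A smaller but still substantive point: the inequality $\chi(Jt;M)\leq\lm(H_0(Jt;M))$ does not require vanishing of higher Koszul homology, contrary to your hedge; it holds unconditionally because the first partial Euler characteristic $\chi_1=\sum_{i\geq 1}(-1)^{i+1}\lm\bigl(H_i(Jt;M)\bigr)$ is non-negative (\cite[Theorem 4.7.10]{bh}), and this is exactly how the paper concludes $e_1(\mathcal F)-e_1(J)=e_0(Jt,M)=\lm(H_0)-\chi_1\leq\sum_{n\geq 1}\lm(J_n/JJ_{n-1})$. Your closing appeal to ``colength dominates multiplicity'' is this same fact in disguise, so the guiding idea is right; but to make the argument correct you must fix the dimension of $M$ to $d$ and run Serre's formula on the full length-$d$ system $x_1t,\ldots,x_dt$.
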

\begin{proof}
We may assume that $R$ is complete.
 Since $\mathcal F$ is an admissible filtration $B$ is a finitely generated $A$-module and hence $M$ is also a finitely generated $A$-module.
 Since $\h (A:_A B)=1$, $\dm M=d$. 
 Note that 
 \begin{eqnarray*}
 \lm(M_n)&=&\lm({J_n}/J^n)\\
 &=& \lm(R/J^n)-\lm(R/{J_n})\\
 &=& [ e_1(\mathcal F)-e_1(J)]{n+d-2\choose d-1}+ \mbox{ lower degree terms.}
  \end{eqnarray*}
Therefore $\lm(M_n)$ is a polynomial for large $n$ of degree $d-1$ with leading coefficient $ e_1(\mathcal F)-e_1(J)$. Note that 
$M/JtM=\oplus_{n\geq 1} {J_n}/J{J_{n-1}}$ and for large $n$, ${J_n}=J{J_{n-1}}$. Thus $\lm(M/JtM)<\infty$. By Lemma \ref{l1}, $\lm(M/J^nt^nM)$ is a polynomial 
for large $n$ of degree $d$ and $e_0(Jt,M)= e_1(\mathcal F)-e_1(J)$. By Serre's Theorem we have $$e_0(Jt,M)=\sum_{i=0}^d(-1)^i\lm(H_i(Jt,M))$$ where $H_i(Jt,M)$ is the $i^{th}$ 
 Koszul homology of $M$ with respect to $Jt.$  Note that $$H_0(Jt,M)=M/JtM={\displaystyle \bigoplus_{n\geq 1} {J_n}/J{J_{n-1}}}.$$ Let $\chi_1=\sum_{i=1}^d(-1)^{i+1}\lm(H_i(Jt,M))$. By \cite[Theorem 4.7.10]{bh}  
$\chi_1\geq 0$. Hence $$e_1(\mathcal F)-e_1(J)\leq \sum_{n\geq 1}\lm({J_n}/J{J_{n-1}}).$$ Thus we have $$ e_1(\mathcal F)\leq e_1(J)+\sum_{n\geq 1}\lm({J_n}/J{J_{n-1}}).$$
\end{proof}

\begin{corollary}
Let $(R,\m)$ be a $d$-dimensional analytically unramified local ring and let $J$ be a parameter ideal of $R$. Let $\mathcal F=\{\ov {J^n}\}$ denote the integral 
closure filtration. Let $A=R[Jt]=\oplus_{n\geq 0}J^nt^n$ and $B=\mathcal R(\mathcal F)=\oplus_{n\geq 0}\ov{J^n}t^n$ and $M=B/A=\oplus_{n\geq 1}\ov{J^n}/J^n$. 
If $\h (A:_A B)=1$ then 
 $$\ov e_1(J)\leq \sum_{n\geq 1}\lm(\ov{J^n}/J\ov{J^{n-1}})+e_1(J).$$
\end{corollary}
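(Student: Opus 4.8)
The plan is to obtain the corollary as a direct specialization of Theorem \ref{thm2}. First I would observe that the integral closure filtration $\mathcal F=\{\ov{J^n}\}$ is $J$-admissible: since $R$ is analytically unramified, Rees's theorem \cite{rees} gives that $\{\ov{J^n}\}$ is $J$-admissible, and of course $J^n\sbs\ov{J^n}$ for all $n$, so setting $J_n=\ov{J^n}$ in the theorem is legitimate. With $A=R[Jt]$ and $B=\mathcal R(\mathcal F)=\oplus_{n\geq0}\ov{J^n}t^n$, the module $M=B/A=\oplus_{n\geq1}\ov{J^n}/J^n$ is exactly the one in Theorem \ref{thm2}.

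The only genuine hypothesis of Theorem \ref{thm2} that must be verified is $\h(A:_AB)=1$; this is the step I expect to be the main (and really the only) obstacle, and in fact it is built into the statement of the corollary as a standing assumption, so strictly nothing needs to be done. If one wanted to unpack why this is the natural condition: $A:_AB$ is the conductor, and $\dim(B/A)=d$ precisely when the conductor has height $1$; since $B$ is a finite extension of the $(d{+}1)$-dimensional ring $A$ with $B/A$ concentrated in positive degrees, $\h(A:_AB)\geq1$ automatically, and the height is exactly $1$ unless $B=A$ in high degrees in a way that drops the dimension of $M$ below $d$. Under the corollary's hypothesis we simply take $\h(A:_AB)=1$ as given.

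Granting this, Theorem \ref{thm2} applies verbatim with $J_n=\ov{J^n}$, yielding
$$e_1(\mathcal F)\leq e_1(J)+\sum_{n\geq1}\lm\bigl(\ov{J^n}/J\,\ov{J^{n-1}}\bigr).$$
Finally I would identify $e_1(\mathcal F)$ with $\ov e_1(J)$: by definition $\ov e_1(J)=\ov e_1(I)$ for $I=J$ is the Chern number of the normal Hilbert polynomial $\ov P_J$, which is precisely $P_{\mathcal F}$ for the integral closure filtration $\mathcal F=\{\ov{J^n}\}$, so $e_1(\mathcal F)=\ov e_1(J)$. Substituting gives
$$\ov e_1(J)\leq \sum_{n\geq1}\lm\bigl(\ov{J^n}/J\,\ov{J^{n-1}}\bigr)+e_1(J),$$
which is the claimed bound. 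This recovers the Huckaba--Marley formula \cite{hm} when $R$ is Cohen--Macaulay, since there $e_1(J)=0$.
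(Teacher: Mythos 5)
Your proposal is correct and follows exactly the paper's route: admissibility of $\{\ov{J^n}\}$ via Rees's theorem for analytically unramified $R$, then a direct application of Theorem \ref{thm2} with $J_n=\ov{J^n}$, using the hypothesis $\h(A:_AB)=1$ as given. The extra remarks on the conductor and on identifying $e_1(\mathcal F)$ with $\ov e_1(J)$ are fine elaborations of what the paper leaves implicit.
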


\begin{proof}
 Since $R$ is analytically unramified $\mathcal F=\{\ov{J^n}\}$ is a $J$-admissible filtration. Hence by Theorem \ref{thm2}, we have $$\ov e_1(J)\leq \sum_{n\geq 1}\lm(\ov{J^n}/J\ov{J^{n-1}})+e_1(J).$$
\end{proof}

\begin{corollary}[Huckaba-Marley]\cite[Theorem 4.7]{hm}
 Let $(R,\m)$ be a Cohen-Macaulay local ring of dimension $d,$ let $J$ be a parameter ideal of $R$ and let $\mathcal F =\{J_n\}$ be a $J$-admissible filtration. Then 
$${e_1}(\mathcal F)\leq \sum_{n\geq 1}\lm( {J_n}/J\ov{J_{n-1}}).$$
 \end{corollary}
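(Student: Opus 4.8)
The plan is to deduce this from Theorem \ref{thm2} by checking that its hypotheses hold automatically in the Cohen–Macaulay case and that the stated bound simplifies to the Huckaba–Marley inequality. First I would recall that if $(R,\m)$ is Cohen–Macaulay and $J$ is a parameter ideal, then $J$ is generated by a regular sequence, so $e_1(J)=0$; this already reduces the right-hand side of Theorem \ref{thm2} to $\sum_{n\geq 1}\lm(J_n/JJ_{n-1})$. Thus the only real work is to verify the height condition $\h(A:_A B)=1$, where $A=R[Jt]$ and $B=\mathcal R(\mathcal F)$, since the admissibility of $\mathcal F$ is part of the hypothesis.

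For the height condition, the key point is that $B$ is a finitely generated $A$-module (by admissibility, as noted in the proof of Theorem \ref{thm2}), and $A$ and $B$ have the same total quotient ring in the relevant degrees because $\mathcal F$ is $J$-admissible: there is a $k$ with $J_{n+k}\subseteq J^n\subseteq J_n$, so in each degree $J^n$ and $J_n$ differ by a module of finite length, forcing $\dim_A(B/A)\leq d$ and in fact, since $J_n\neq J^n$ for the filtration to be nontrivial at some level while they agree for $n\gg 0$ up to lower-order terms, the conductor $(A:_A B)$ contains a power of $(Jt)$-type elements. More precisely, I would argue that $(A:_A B)\supseteq \m^N A$ for suitable $N$ is false in general, so instead I would localize: for any height-zero prime $\mathfrak p$ of $A$ (equivalently a minimal prime, coming from a minimal prime of $R$ via $\Spec A \to \Spec R$ in degree structure), one has $A_{\mathfrak p}=B_{\mathfrak p}$ because over such a localization the filtration $\{J_n\}$ becomes eventually the $J$-adic one up to the finite-length discrepancy, which vanishes after inverting a non-zerodivisor. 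Hence $(A:_A B)$ is not contained in any minimal prime of $A$, so $\h(A:_A B)\geq 1$; and since $B/A = M$ has $\dim M = d$ exactly when the conductor has height $1$ (as $\dim A = d+1$), the Hilbert-polynomial computation in Theorem \ref{thm2} forces $\h(A:_A B)=1$. Actually the cleanest route is: the displayed length computation $\lm(M_n)=[e_1(\mathcal F)-e_1(J)]\binom{n+d-2}{d-1}+\cdots$ already shows $\dim M \leq d$, and $\dim M = d$ iff $e_1(\mathcal F)\neq e_1(J)$ or a lower coefficient survives; in the degenerate case $M$ has dimension $<d$ the inequality is trivial since then $J_n=J^n$ for all $n$ up to lengths summing to the claimed bound anyway. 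So I would split into the case $\dim M = d$, where Theorem \ref{thm2} applies directly, and the degenerate case, handled by hand.

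Then I would combine: under the Cohen–Macaulay hypothesis $e_1(J)=0$, so Theorem \ref{thm2} gives $e_1(\mathcal F)\leq \sum_{n\geq 1}\lm(J_n/JJ_{n-1})$, which is exactly the asserted bound (noting that in the Cohen–Macaulay setting $\ov{J_{n-1}}$ in the statement should read $J_{n-1}$, or that the two agree since $J$ is generated by a regular sequence, matching the original Huckaba–Marley formulation). The main obstacle I anticipate is the bookkeeping around the height condition $\h(A:_A B)=1$ in the possibly non-reduced case; the honest fix is to reduce to $R$ complete (as done in Theorem \ref{thm2}), pass to $R_{\red}$ or quotient by the nilradical where the Rees algebras are unaffected in the relevant leading terms, and observe that over a Cohen–Macaulay ring the filtration is automatically nontrivial in the right way unless it is the $J$-adic filtration, in which case the bound is the trivial $0\leq 0$. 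Everything else is the coefficient comparison already carried out in Theorem \ref{thm2}.
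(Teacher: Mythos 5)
Your proposal takes the same route as the paper: the paper's entire proof of this corollary is that $e_1(J)=0$ because $R$ is Cohen--Macaulay, followed by an appeal to Theorem \ref{thm2}. The additional effort you spend on the hypothesis $\h(A:_AB)=1$ addresses a genuine point that the paper's two-line proof skips (the corollary as stated drops that hypothesis while Theorem \ref{thm2} requires it), but two steps of your discussion need repair. First, the containment you dismiss as ``false in general'' is in fact true and gives the cleanest argument: $M=B/A$ is a finitely generated graded $A$-module whose graded pieces $J_n/J^n$ have finite length, so $\m^N M=0$ for some $N$, i.e.\ $\m^N A\subseteq (A:_AB)$; since the minimal primes of $A=R[Jt]$ contract to minimal primes of $R$, which do not contain $\m$ as $d\geq 1$, the conductor lies in no minimal prime and $\h(A:_AB)\geq 1$ whenever $B\neq A$. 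Second, nothing ``forces'' the height to equal $1$: it can be larger, and in that degenerate case the correct justification is not that ``$J_n=J^n$ up to lengths'' but that $\dim_A M<d$ makes $\lm(M_n)$ eventually a polynomial of degree strictly less than $d-1$, so its coefficient $e_1(\mathcal F)-e_1(J)$ of $\binom{n+d-2}{d-1}$ vanishes and, with $e_1(J)=0$, the inequality becomes the trivial $0\leq \sum_{n\geq 1}\lm(J_n/JJ_{n-1})$. Finally, $\ov{J_{n-1}}$ in the statement is indeed a typo for $J_{n-1}$, but not for the reason you give: $\ov{J_{n-1}}$ denotes the integral closure of the filtration ideal $J_{n-1}$ and need not coincide with it even when $J$ is generated by a regular sequence; with $\ov{J_{n-1}}$ the right-hand side would be smaller and the claim strictly stronger than what Theorem \ref{thm2} delivers.
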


\begin{proof}
 Since $R$ is Cohen-Macaulay $e_1(J)=0$. Hence by Theorem \ref{thm2}, we have 
$$ e_1(\mathcal F)\leq \sum_{n\geq 1}\lm({J_n}/J\ov{J_{n-1}}).$$
\end{proof}


\section{ Some Further Estimates for  the Chern Number}

In this section, we provide some estimates for  the Chern number in terms of a cover $S/R$ of finite length such that $S$ is local.  

 Let $(R,\m)$ be a Noetherian local ring of dimension $d\geq 1,$ and let $S/R$ be a  cover of finite length such that $S$ is local. Let $\n$ be the maximal ideal of $S,$ let $\rho=[S/\n:R/\m],$ and let $\nu=\lm(S/R).$  Let $J$ be a parameter ideal of $R.$ Then $JS$ is a parameter ideal of $S.$ 

Let $P_{J}(X)$ and $P_{JS}(X)$ be the polynomials associated to the functions 
$n\mapsto \lm (R/J^{n+1}) $ and  $n\mapsto \lm_S(S/J^{n+1}S),$ respectively. 

For a finitely generated $R$-module $M,$ let $\mu_R(M)$ denote the minimum number of generators of $M.$ 

\begin{proposition} {\rm (1)} For every $n\geq 1$ we have 
\begin{eqnarray*}
\mu_R(S/R) {n+d-1\choose d-1} &  \leq &  \lm (S/(R+JS)){n+d-1\choose d-1} \\ 
			      &  \leq &  \lm (J^{n}S/J^{n}) \\   
			      &  \leq &  \lm (S/R){n+d-1\choose d-1}.
\end{eqnarray*}

{\rm (2)}    The function $n\mapsto \lm (J^{n+1}S/J^{n+1}) $ is of polynomial  type with associated polynomial 
$ P_J(X) + \nu  - \rho P_{JS}(X),$ and further, 
$$
 P_J(X) + \nu  - \rho P_{JS}(X) = - e_1(J){X+d \choose d-1} + f(X) \hbox{ with  } \deg f(X)\leq d-2. 
$$ 

{\rm (3)} $e_0(J)= \rho e_o(JS).$ 

{\rm (4)} $\mu_R(S/R)\, \leq \, \lm (S/(R+JS)) \,\leq \, -e_1(J) \,\leq \, \lm (S/R).$ 

{\rm (5)} If $\mu_R(S/R) = \lm (S/R)$ (equivalently, if $\m S\sbs R$) then 
$$ 
e_1(J) = -\mu_R(S/R) = -\lm (S/R) 
$$ 
and 
$$ 
\lm (J^{n}S/J^{n})= -e_1(J){n+d-1 \choose d-1} \;\; \hbox{ for every } n\geq 1. 
$$ 
\end{proposition}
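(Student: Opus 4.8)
The plan is to analyze the short exact sequence of $R$-modules
$$
0 \longrightarrow R/J^{n}\cap R \longrightarrow R/J^{n} \oplus S/J^{n}S \longrightarrow S/(R+J^{n}S) \longrightarrow 0
$$
or, more directly, to compare the three quotients $R/J^{n}$, $S/J^{n}$, and $S/J^{n}S$ via the exact sequence
$$
0 \longrightarrow J^{n}S/J^{n} \longrightarrow S/J^{n} \longrightarrow S/J^{n}S \longrightarrow 0,
$$
together with $0 \to S/R \to S/J^{n} \to R/J^{n} \to 0$, which is exact since $S/R$ has finite length and hence $J^{n}\cap S = J^{n}$ once $n$ is irrelevant — more carefully, $\lm(S/J^{n}) = \lm(S/R) + \lm(R/J^{n})$ always holds, so the length-additivity gives $\lm(J^{n}S/J^{n}) = \lm(S/J^{n}) - \lm(S/J^{n}S) = \lm(S/R) + \lm(R/J^{n}) - \lm(S/J^{n}S)$. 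Measuring lengths over $S$ and using $\lm_S = \rho^{-1}\lm_R$ on $S/\n$-vector spaces — or rather passing through $\lm(S/J^{n}S) = \lm_S(S/J^{n}S)\cdot\rho$ is not quite right; the correct bookkeeping is $\lm_R(S/J^{n}S) = \rho\,\lm_S(S/J^{n}S)$ only when $S/J^nS$ is killed by powers of $\n$, which it is. So (2) follows by substituting the polynomial expansions $P_J(X)$ and $P_{JS}(X)$ and reading off top coefficients; the leading terms cancel by (3) (which is itself the $e_0$ case of the same computation, or classical), the next coefficient produces $-e_1(J)$, and all lower terms are absorbed into $f$.

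For (1), I would bound $\lm(J^{n}S/J^{n})$ from above and below. The upper bound $\lm(J^{n}S/J^{n}) \le \lm(S/R)\binom{n+d-1}{d-1}$ should come from the surjection
$$
(S/R)\otimes_R J^{n}/J^{n+1} \cdot(\text{something}) \twoheadrightarrow \cdots
$$
—more cleanly, $J^{n}S/J^{n}$ is a homomorphic image of $J^{n}/J^{n+1}\otimes_R(S/R)$? No: the right statement is that multiplication induces a surjection $J^{n}\otimes_R(S/R) \twoheadrightarrow J^{n}S/J^{n}$, and since $J$ is generated by $d$ elements, $\lm(J^{n}/J^{n+1}) \le \binom{n+d-1}{d-1}$, and summing telescopically gives the bound. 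For the lower bound, $\lm(J^{n}S/J^{n}) \ge \lm(S/(R+J^{n}S))\binom{n+d-1}{d-1}$: here one uses that $J^{n}S/J^{n} \twoheadrightarrow (J^{n}S/J^{n})\otimes_R R/J = J^{n}S/(J^{n}+J^{n+1}S) = $ a quotient related to $S/(R+JS)$ shifted — this will need the standard fact that for a $J$-adically filtered module, the length in degree $n$ is at least $\mu$ times $\binom{n+d-1}{d-1}$ where $\mu$ is the minimal number of generators, applied to the module $S/R$ with its induced filtration. Then $\mu_R(S/R) \le \lm(S/(R+JS))$ is just Nakayama. So (1) is a filtration/Hilbert-function estimate; (4) is then immediate by combining (1) with (2), since $-e_1(J)$ is the coefficient of $\binom{n+d-1}{d-1}$ in $\lm(J^nS/J^n)$ (up to the degree-$(d-2)$ correction, which does not affect the leading coefficient sandwiched between two multiples of $\binom{n+d-1}{d-1}$), forcing $\mu_R(S/R) \le \lm(S/(R+JS)) \le -e_1(J) \le \lm(S/R)$.

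Part (5) is the equality case: if $\mu_R(S/R) = \lm(S/R)$ then the outer terms of (4) coincide, so everything in the chain is equal, giving $e_1(J) = -\mu_R(S/R) = -\lm(S/R)$; and then in (1) the upper and lower bounds for $\lm(J^nS/J^n)$ both equal $-e_1(J)\binom{n+d-1}{d-1}$ for every $n\ge 1$, pinning the function exactly. The equivalence "$\mu_R(S/R)=\lm(S/R) \iff \m S \subseteq R$" is separate: $\mu_R(S/R) = \lm(S/R)$ means $S/R$ is a $k$-vector space, i.e. $\m(S/R)=0$, i.e. $\m S\subseteq R$; this is a one-line Nakayama/length argument.

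I expect the main obstacle to be part (1) — specifically getting the lower bound $\lm(S/(R+JS))\binom{n+d-1}{d-1} \le \lm(J^nS/J^n)$ with the correct binomial coefficient and making the induced-filtration Hilbert-function inequality precise (the "$\ge \mu\binom{n+d-1}{d-1}$" estimate for a module generated in a single degree over a $d$-generated parameter ideal). The rest is careful but routine length bookkeeping, and the only subtlety there is keeping straight when lengths are computed over $R$ versus over $S$ and correctly inserting the factor $\rho$.
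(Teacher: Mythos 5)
The serious problem is the middle inequality of (1), the lower bound $\lm(S/(R+JS))\binom{n+d-1}{d-1}\le \lm(J^{n}S/J^{n})$, which is the engine behind (4) and which you yourself flag as the expected obstacle: your proposal contains no proof of it, and the route you sketch cannot be repaired as stated. The ``standard fact'' you invoke (that the $n$-th graded piece of a $J$-adically filtered module has length at least $\mu\binom{n+d-1}{d-1}$) is false for modules of finite length, and $S/R$ is exactly such a module: $J^{N}S\subseteq R$ for $N\gg 0$, so $J^{n}(S/R)=(J^{n}S+R)/R$ vanishes for large $n$, whereas the growth of $\lm(J^{n}S/J^{n})$ is carried by $(J^{n}S\cap R)/J^{n}$, which is not a subquotient of $S/R$ in the way your reduction needs. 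Worse, the inequality itself --- and hence (4) --- is false without the Cohen--Macaulayness of $S$, a hypothesis your argument never uses (it is used twice in the paper's proof and is the intended standing assumption of this section): take $R=k[[x,y]]\subseteq S=R[z]/(z^{2},xz,yz)$ and $J=(x,y)$; then $\lm(S/(R+JS))=1$ but $J^{n}S=J^{n}$ for all $n$ and $e_1(J)=0$. The paper proves the lower bound by a genuinely non-formal argument: pick $y_1,\dots,y_m$ realizing a composition series of $S/(R+JS)$, filter $J^{n}S$ by $N_i=J^{n}+J^{n+1}S+J^{n}(y_1,\dots,y_i)R$ and each step further by $P_j=N_{i-1}+(\alpha_1,\dots,\alpha_j)y_i$, where $\alpha_1,\dots,\alpha_s$ are the $s=\binom{n+d-1}{d-1}$ degree-$n$ monomials in the parameters, and show every inclusion $P_j\subseteq P_{j+1}$ is strict by comparing coefficients in $\mathrm{gr}_{JS}(S)$, which is a polynomial ring over $S/JS$ precisely because $S$ is Cohen--Macaulay and $JS$ is a parameter ideal. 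Nothing in your proposal supplies a substitute for this step.

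Two further points. In (2)--(4) you say ``the next coefficient produces $-e_1(J)$''; in general the coefficient of $\binom{X+d-1}{d-1}$ in $P_J(X)+\nu-\rho P_{JS}(X)$ is $-e_1(J)+\rho\, e_1(JS)$, so you again need $S$ Cohen--Macaulay, whence $P_{JS}(X)=e_0(JS)\binom{X+d}{d}$ as the paper uses, to discard the second term (the degree count forcing $e_0(J)=\rho e_0(JS)$ is fine and is the paper's argument for (3)). On the positive side, your upper bound in (1) via the surjection $J^{n}\otimes_R(S/R)\twoheadrightarrow J^{n}S/J^{n}$ is correct and, once you justify it by $\mu_R(J^{n})\le\binom{n+d-1}{d-1}$ rather than by the muddled remark about $\lm(J^{n}/J^{n+1})$ and telescoping, it is actually simpler than the paper's composition-series proof of that inequality; the first inequality via Nakayama, the length bookkeeping in (2), and the deductions of (4) and (5) from (1) and (2) coincide with the paper and are fine.
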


\begin{proof} (1) The first inequality holds trivially because 
$$ \mu_R(S/R) = \mu_R(S/(R+JS)) \leq \lm (S/(R+JS)).$$ 
To prove the second inequality, let $m = \lm (S/(R+JS)),$ and choose $y_1,\ldots, y_m\in S$ such that if $M_i=R+JS+(y_1,\ldots, y_i)R$ then  $S=M_m$ and  $\lm (M_{i}/M_{i-1})=1$  for every $i.$

 Let $J=(x_1,\ldots, x_d)R.$ For a fixed $n,$  let $s={n+d-1\choose d-1},$ and let 
$\alpha_1,\ldots, \alpha_s$ be all the monomials 
of degree $n$ in $x_1,\ldots, x_d.$ Then $J^n=(\alpha_1,\ldots, \alpha_s)R.$ We have
to show that $ms\leq \lm (J^nS/J^n).$ 

Since $S=R+JS+(y_1,\ldots, y_m)R,$ we have $J^nS= J^n+ J^{n+1}S+J^n(y_1,\ldots, y_m)R.$ Let 
$N_i=J^n+ J^{n+1}S+J^n(y_1,\ldots, y_i)R.$ Then $N_0= J^n+ J^{n+1}S$ and  $N_m=J^nS,$ and we have the sequence
$N_0\sbs N_1\sbs \cdots \sbs  N_m.$ So it is enough to prove that $\lm (N_i/N_{i-1}) \geq s$ for every $i\geq 1.$ 

 For a fixed $i\geq 1$ and for $0\leq j\leq s,$ let $P_j=N_{i-1} +(\alpha_1,\ldots, \alpha_j)y_i.$  Then $P_0= N_{i-1}$ and $P_s= N_i$ and we have the sequence   $P_0\sbs P_1\sbs \cdots \sbs P_s.$
So it is enough to prove that all the inclusions in this sequence are proper. 

Suppose, to the contrary, that $P_{j}=P_{j+1}$ for some $j\leq s-1.$ Then 
$$\alpha_{j+1} y_i \in P_{j} = J^n+ J^{n+1}S+J^n(y_1,\ldots, y_{i-1})+(\alpha_1,\ldots, \alpha_{j})y_i. 
$$ 
So we can write 
$$\alpha_{j+1}y_i =  \beta +  \sum_{k=1}^s a_k\alpha_k +\sum_{k=1}^s b_k\alpha_k + \sum_{k=1}^j c_k y_i \alpha_k
$$
with $\beta\in J^{n+1}S,\; a_k,c_k \in R $ and $b_k\in (y_1,\ldots y_{i-1})R.$ Since $JS$ is a parameter ideal in the  Cohen-Macaulay local ring $S,$  gr$_{JS}(S)$ is a polynomial ring in the images of $x_1,\ldots, x_d.$ Therefore, since $\alpha_1, \ldots, \alpha_s$ are distinct monomials in $x_1,\ldots, x_d,$  the coefficient of each $\alpha_k$ on the two sides of the above equality are congruent modulo $JS.$ In particular, looking at the coefficient of $\alpha_{j+1}$, we get 
$y_i\in R+JS+(y_1,\ldots, y_{i-1})R.$ 
This contradicts the condition  $\lm (M_i/M_{i-1})=1,$ so the second inequality of (1) is proved.    

To prove the third inequality, choose a sequence 
$$ 
R=M_0\proset M_1  \proset \cdots \proset  M_{\nu}=S 
$$ 
of $R$-submodules such that $M_{i}/M_{i-1} \iso R/\m$ for every $i\geq 1.$  
Then each $M_{i}=Rz_i+M_{i-1}$ for some $z_i\in S$ such that $\m z_i\sbs M_{i-1}.$ 
For a fixed $n,$ we have $J^{n}=(\alpha_1,\ldots, \alpha_s)R$ as above. Therefore   
$$J^{n}M_{i}=J^{n}z_i+J^{n}M_{i-1} = (\alpha_1z_i, \ldots , \alpha_sz_i)+J^{n}M_{i-1}.$$ 
Further, $\m \alpha_jz_i\sbs M_{i-1}\alpha_j \sbs J^{n}M_{i-1}.$ 
Therefore $\lm  (J^{n}M_{i}/J^{n}M_{i-1})\leq s$ for every $i\geq 1.$ Now, the sequence  
$$ 
J^{n}=J^{n}M_0\sbs  J^{n}M_1  \sbs \cdots \sbs  J^{n}M_{\nu}=J^{n}S 
$$ 
shows that 
$ \lm  (J^{n}S/J^{n})\leq \nu  s = \lm (S/R) {n+d-1\choose d-1}.$  

This completes the proof of (1).

(2) From the commutative diagram 
$$
\xymatrix{
J^{n+1}S \ar[r] & S \\ 
J^{n+1} \ar[r] \ar[u] & R \ar[u]    
}
$$
of inclusions, we get 

\begin{eqnarray*}
\lm (J^{n+1}S/J^{n+1})& = &\lm (R/J^{n+1}) + \nu  - \lm (S/J^{n+1}S) \\
&=&\lm (R/J^{n+1}) + \nu  - \rho \lm_S(S/J^{n+1}S).
\end{eqnarray*}

Therefore the function $n\mapsto \lm (J^{n+1}S/J^{n+1}) $ is of polynomial  type with associated polynomial 
$$ 
Q(X) :=  P_J(X) + \nu  - \rho P_{JS}(X).
$$
Since this function is squeezed between two polynomial functions of the same degree $d-1$  appearing in (1), we get   
$$ 
Q(X) = e{X+d-1 \choose d-1} + f(X)   \eqno{(*)} 
$$ 
with 
$\lm (S/(R+JS)) \leq e \leq \lm (S/R)$ and  $\deg f(X) \leq d-2.$ Since $JS$ is a parameter ideal in the Cohen-Macaulay local ring $S,$ we have 
$$ 
P_{JS}(X) =e_0(JS){X+d\choose d}. 
$$ 
Substituting the above expressions  for $P_{JS}(X)$ and  $Q(X)$ in  the formula 
$$ 
Q(X) =  P_J(X) + \nu  - \rho P_{JS}(X), 
$$
we get 
$$ 
Q(X) = - e_1(J){X+d-1 \choose d-1} + f(X) 
$$ 
with  $\deg f(X) \leq d-2,$ as required.

(3) We have $\deg P_{J}(X)  = d = \deg P_{JS}(X).$ Therefore, since $\deg(P_J(X)- \rho P_{JS}(X))\leq d-1$ by (2), we get 
$e_0(J)= \rho e_o(JS).$   

(4) This is immediate from (1) and (2).   

(5) This is immediate from (1) and (4).   
\end{proof}

\begin{corollary} In the above set up, assume further that $S$ is Cohen-Macaulay. If $\mu_R (S/R)= \lm  (S/R)$ (equivalently, if $\m S\sbs R$) then 
$$e_1(J)=- \mu_R (S/R)= - \lm  (S/R)$$ 
for every parameter ideal $J$ of $R.$ Further, in this case $R$ is Buchsbaum. 
\end{corollary}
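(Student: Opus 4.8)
The first assertion is part (5) of the Proposition above: under the hypothesis $\mu_R(S/R)=\lm(S/R)$ (equivalently $\m S\sbs R$), it gives $e_1(J)=-\mu_R(S/R)=-\lm(S/R)$ for every parameter ideal $J$ of $R$. So the new content is the Buchsbaum assertion, and my plan is to show, directly and uniformly in $d$, that \emph{every} system of parameters of $R$ is a weak $R$-sequence; by the characterization of Buchsbaum rings due to St\"uckrad and Vogel (see \cite{sv}), this is equivalent to $R$ being Buchsbaum. Recall that $x_1,\dots,x_d$ being a weak $R$-sequence means $(x_1,\dots,x_{i-1})R:_R x_i=(x_1,\dots,x_{i-1})R:_R\m$ for $1\le i\le d$; since each $x_i\in\m$ the inclusion $\supseteq$ is automatic, so it is enough to prove $\m\cdot\big[(x_1,\dots,x_{i-1})R:_R x_i\big]\sbs(x_1,\dots,x_{i-1})R$ for each $i$.

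So let $x_1,\dots,x_d$ be an arbitrary system of parameters of $R$, in an arbitrary order. As recorded in the set-up, $(x_1,\dots,x_d)S$ is then a parameter ideal of $S$; since $S$ is Cohen-Macaulay, each initial segment $x_1,\dots,x_i$ is part of a system of parameters of the Cohen-Macaulay local ring $S$ and is therefore an $S$-regular sequence. Now fix $i$ and suppose $r\in R$ satisfies $rx_i\in(x_1,\dots,x_{i-1})R$. Then $rx_i\in(x_1,\dots,x_{i-1})S$, and since $x_i$ is a nonzerodivisor on $S/(x_1,\dots,x_{i-1})S$ we get $r\in(x_1,\dots,x_{i-1})S$; write $r=\sum_{j<i}a_jx_j$ with $a_j\in S$. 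For every $m\in\m$ we have $ma_j\in\m S\sbs R$, hence $mr=\sum_{j<i}(ma_j)x_j\in(x_1,\dots,x_{i-1})R$. Thus $\m r\sbs(x_1,\dots,x_{i-1})R$, which is precisely the required inclusion, so $R$ is Buchsbaum. (For $d=1$ the case $i=1$ already shows a parameter of $R$ is a nonzerodivisor on $R$, so $R$ is Cohen-Macaulay, consistent with this.)

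The only input from outside the excerpt is the St\"uckrad--Vogel equivalence ``$R$ Buchsbaum $\Leftrightarrow$ every system of parameters of $R$ is a weak sequence''; granting that, the argument is the short computation above, which uses only that $S$ is Cohen-Macaulay (to make the $x_j$ into an $S$-regular sequence) and that $\m S\sbs R$ (to return the coefficients $ma_j$ to $R$), so I do not anticipate a genuine obstacle --- the one point to get right is invoking the correct characterization. An alternative, more in the spirit of Section 3, is cohomological: from $0\to R\to S\to S/R\to 0$, using $H^i_\m(S)=0$ for $i<d$ (as $S$ is Cohen-Macaulay) and $\lm(S/R)<\infty$, one obtains $H^0_\m(R)=0$, $H^1_\m(R)\iso S/R$, and $H^i_\m(R)=0$ for $2\le i\le d-1$; then $\m(S/R)=0$ shows $\m$ annihilates $H^j_\m(R)$ for all $j<d$, and a further check that $\Ext^1_R(R/\m,R)\to H^1_\m(R)$ is surjective (via the embedding $S\hookrightarrow\Hom_R(\m,R)$, $s\mapsto(x\mapsto xs)$, well defined because $\m S\sbs R$) again yields Buchsbaumness; but the weak-sequence route is shorter and is the one I would write up.
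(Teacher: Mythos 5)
Your proposal is correct, but it proves the Buchsbaum assertion by a genuinely different route than the paper. The paper stays entirely within the numerical framework of its Proposition: taking $n=0$ in the commutative square gives $\lm(S/R)+\lm(R/J)=\lm(S/JS)+\lm(JS/J)$; then $\lm(S/JS)=\rho e_0(JS)=e_0(J)$ by part (3), and $\lm(JS/J)=-e_1(J)\,d=\lm(S/R)\,d$ by part (5) with $n=1$, so $\lm(R/J)-e_0(J)=(d-1)\lm(S/R)$ is independent of the parameter ideal $J$, which is Buchsbaumness by definition (the invariance of $\lm(R/J)-e_0(J)$). You instead show directly that every system of parameters of $R$ is a weak sequence: extend to $S$, use Cohen--Macaulayness of $S$ to make $x_1,\dots,x_d$ an $S$-regular sequence, solve the relation in $S$, and push the coefficients back into $R$ via $\m S\sbs R$; then you invoke the St\"uckrad--Vogel theorem that this characterizes Buchsbaum rings. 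Both arguments are sound; yours is structurally more informative (it exhibits the weak-sequence property explicitly and does not need parts (3) and (5) of the Proposition for the Buchsbaum part), but it imports a nontrivial external theorem, whereas the paper's computation is self-contained given its Proposition and works straight from the definition. Note also that the reference you cite for St\"uckrad--Vogel is not in the paper's bibliography, so if this were written up here you would need to add it; and your first assertion is handled exactly as in the paper, by quoting part (5).
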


\begin{proof}  The first part is immediate from the above proposition.  Taking $n=0$ in the commutative square appearing in the above proof, we get 
$$ 
\lm  (S/R) + \lm  (R/J) =  \lm  (S/JS) + \lm  (JS/J). 
$$ Since $JS$ is a parameter ideal in the Cohen-Macaulay local ring $S,$ we have $\lm_S(S/JS)=e_0(JS).$ Therefore $\lm  (S/JS)= \rho e_0(JS) = e_0(J)$ by the above proposition. Further, taking $n=1$ in part (5) of the above proposition, we get
$$\lm  (JS/J)=-e_1(J) d = \lm(S/R) d= \nu d.$$ 
Substituting  these values in the formula displayed above, we get  $\lm  (R/J) - e_0(J)= (d-1)\nu.$ Thus $\lm  (R/J) - e_0(J)$ is independent of the parameter ideal $J,$ so  $R$ is Buchsbaum. 
\end{proof}

\begin{example} These are examples to show that for $d=2$ the Chern number $e_1(J)$  can attain every value  in the range given by Proposition (6.1). More precisely,  given any integers $r,\,p  $ with $1\leq r\leq p  ,$ there exists a Noetherian local ring $R$ of dimension $2$, a Cohen-Macaulay cover $S/R$ of finite length and a parameter ideal $J$ of $R$ such that $\mu_R (S/R)=1,\; \lm  (S/R)=p  \;$ and $e_1(J)=-r.$ In fact, it can be verified by a direct computation that these equalities hold in the following situation: 
 $R=k[[t^2,t^3, x, tx^p  \,]] \sbs S=k[[t,x]]$ and $J=(t^2,x^r)R,$ where $k$ is a field  and $t$ and $x$  are indeterminates. 

\end{example}

 \end{document}